\titleformat*{\section}{\large\bfseries}
\newtheorem{theorem}{Theorem}[section]
\newtheorem{lemma}[theorem]{Lemma}
\newtheorem{corollary}[theorem]{Corollary}
\newtheorem{example}[theorem]{Example}
\newtheorem{proposition}[theorem]{Proposition}
\newtheorem{remark}[theorem]{Remark}
\numberwithin{equation}{section}
\title{Self-adjoint and co-isometry composition and weighted composition operators on Fock-type spaces}
\author{\large Anuradha Gupta and Geeta Yadav$^*$}
\date{}
\begin{document}
\maketitle
\begin{abstract} 
In this paper we obtain characterizations for adjoint of a composition and weighted composition operator to be composition and weighted composition operator on $F_{\psi}^2,$ respectively. We study the co-isometry composition and weighted composition operators on $F_{\psi}^2.$

\textbf{Mathematics Subject Classification:} 47B33, 47B38

\textbf{Keywords:} Co-isometry, Fock-type space, self-adjoint, weighted composition operator 
\end{abstract}    

\section{Introduction and preliminaries}

Consider set $S=[0, +\infty)$ and let $\psi: S \longrightarrow S$ be a $C^3$- function with
\begin{equation} \label{equation 1}
 \psi'(y)>0, \,\,\,\, \psi''(y)\geq 0 \,\, \text{and} \, \psi'''(y) \geq 0 \,\, \text{for all}\,\, y\in S.
 \end{equation}   
It follows from \eqref{equation 1} that at least $\psi$ should grow as a linear function.
Recall that, a sequence $c^{*}=\{c_{r}^{*}\}_{r \geq 0}$ is Stieltjes moment sequence if
$$c_{r}^{*}=\int_{0}^{\infty} s^{r} \, d\, \nu(s), $$
where $\nu$ is a non-negative measure on $S.$ Stieltjes \cite{Stieltjes} have characterized these sequences in terms of some positive definiteness conditions. Let $d\, \nu(s)=e^{-\psi(s)} ds$ and let sequence $c=\{c_{r}\}_{r \geq 0}$ where
$$c_{r}=\int_{0}^{\infty} s^{r}\, e^{-\psi(s)} \,ds. $$

We denote the complex Euclidean $n$-space by $\mathbb{C}^n$ and by $d \mu$ we denote the Lebesgue measure on $\mathbb{C}^n.$ We define
$$ \langle w,z \rangle= \sum_{j=1}^{n} w_{j} \bar{z_{j}} \,\,\, \text{and}\,\,\,  \langle w,w \rangle=|w|^2 $$
for all $w=(w_{1},w_{2},\cdots,w_{n})$ and $z=(z_{1},z_{2},\cdots,z_{n}) \in \mathbb{C}^n. $  We will use the notation $z=(z_{1},z_{2},\cdots,z_{n})$ or  $z=(z_{1} \, z_{2}\, \cdots z_{n})^{T}$ for $z \in \mathbb{C}^n$ where $T$ denotes the transpose of a matrix.

For $1 \leq p < \infty$ the Fock-type space $F_{\psi}^{p}$ is the Banach space of all holomorphic complex valued functions on $\mathbb{C}^n$ such that
$$||g||_{p}=\Big(\int_{\mathbb{C}^n} |g(z)  e^{-\frac{1}{2} \psi(|z|^2)}|^{p} d\nu(z) \Big)^{\frac{1}{p}} < \infty.$$
In particular, for $p=2$, $F_{\psi}^{2}$ is a Hilbert space with the inner product defined by
$$\langle g,h \rangle_{\psi}=\int_{\mathbb{C}^n} g(z) \overline{h(z)} e^{- \psi(|z|^2)} d\nu(z) $$
for all $g, h \in F_{\psi}^{2}.$

Suppose that the weight function $\psi$ satisfies the additional condition that there exists some real number $l< \frac{1}{2}$ such that
\begin{equation}\label{mild smoothness condition}
 \phi''(y)=O\, (y^{-1/2} [\phi'(y)]^{1+l}), \,\, y \to \infty, 
\end{equation} 
where $\phi(y)=y\, \psi'(y).$ Moreover, when $n>1$ we assume that $\psi$ also satisfies the equation \eqref{mild smoothness condition}.\\

Let $G$ be the function defined by 
$$ G(t)= \sum_{r=0}^{\infty} \frac{t^{r}} {c_{r}} \,\,\, \text{for}\,\, t \in \mathbb{C},$$
this series has an infinite radius of convergence  (see \cite{Hankel operators and the Stieltjes moment}) and the reproducing kernel, $K_{p,\psi}\in F_{\psi}^{2} ,$ for point evaluation at $p \in \mathbb {C}^{n}$ satisfying $f(p)=\langle f, K_{p,\psi} \rangle$ for each $f\in F_{\psi}^{2}$  is
\begin{align*}
K_{p,\psi}(z) &= \frac{1}{(n-1)!} \, G^{(n-1)} (\langle z,p \rangle)  \\
             &=\frac{1}{(n-1)!}\sum_{r=n-1}^{\infty} \frac{r(r-1) \cdots(r-n+2)}{c_{r}} \langle z, p \rangle^{r-n+1}.
\end{align*} 
In the subsequent sections we will repeatedly use the fact that set $Span\{K_{p,\psi} : p \in \mathbb{C}^n \}$ is dense in $F_{\psi}^{2}$ \cite{Bounded and compact}. Note that for $\psi$ being polynomial of degree one, $F_{\psi}^{2}$ coincide with the classical Fock space. Also moments $c_{r}>0$ for all $r \geq 0.$ For more information on moments $c_{r}$ and space $F_{\psi}^{2}$ one can see \cite{Hankel op & Related Bergman kernel}. \\

Clearly, $K_{0,\psi}(w)=\frac{1}{c_{n-1}}$ and  $K_{p,\psi}(0)=\frac{1}{c_{n-1}}.$ Moreover,
\begin{equation} \label{property of conjugate reproducing kernel}
\overline{K_{p,\psi}(z)}= \overline{\langle K_{p,\psi}, K_{z,\psi} \rangle}=\langle K_{z,\psi}, K_{p,\psi} \rangle=K_{z,\psi}(p) \,\,\,\,\text{for all} \,\,  p, z\in \mathbb{C}^n
\end{equation}
and
\begin{equation*} 
||K_{p,\psi}||^2=\langle K_{p,\psi}, K_{p,\psi} \rangle= K_{p,\psi}(p) \,\,\,\,\text{for all} \,\,  p \in \mathbb{C}^n.
\end{equation*}

Let $X$ be a space whose elements are holomorphic functions defined on some domain V. For holomorphic functions  $U:V \longrightarrow \mathbb{C}$  and $\Gamma: V \longrightarrow V,$ the weighted composition operator $C_{U,\Gamma}$ is defined by
$$C_{U,\Gamma}f=U \cdot( f \circ \Gamma)$$
for $f\in X.$ If $U \equiv 1,$ then it becomes the composition operator $C_{\Gamma}f=( f \circ \Gamma)$ induced by $\Gamma.$ \\

If $C_{U,\Gamma}$ is a weighted composition operator on $F_{\psi}^2,$ then 
\begin{equation*}
\langle h,C_{U,\Gamma}^* K_{z,\psi} \rangle=\langle C_{U,\Gamma} h, K_{z,\psi} \rangle= U(z) \langle h,K_{\Gamma(z),\psi} \rangle= \langle h,\overline{U(z)} K_{\Gamma(z),\psi} \rangle \,\,\text{for all} \,\, h\in F_{\psi}^2,\, z \in \mathbb{C}^n.
\end{equation*}
Thus, $C_{U,\Gamma}^* K_{z,\psi}=\overline{U(z)} K_{\Gamma(z),\psi}$ for all $z\in \mathbb{C}^n.$\\

In this paper we study the relation between the function-theoretic behaviours of $U$ and $\Gamma$ with the operator-theoretic properties of $C_{U,\Gamma}.$ Many researchers have explored these relations on different function spaces; like  Hardy space and Bergman space on the unit disk or on Fock space.

Recently, many researchers have studied and obtained the characterizations for self-adjoint and co-isometric weighted composition operators on Hilbert spaces of analytic functions.

 Zhao and Pang \cite{Zhao and Pang} and  Zhao \cite{Normal weig comp op on the fock sp} obtained characterization for self-adjoint weighted composition operator on the Fock space of complex plane and $\mathbb{C}^n,$ respectively. Zhao \cite{A note on inv weigh comp on the fock sp} obtained that the necessary and sufficient condition for weighted composition operator to be co-isometry on the Fock space of $\mathbb{C}^n$ is that, it is unitary.

 In section 2, we obtain the characterizations for adjoint of a composition and weighted composition operator to be some composition and weighted composition operator on $F_{\psi}^2,$ respectively. Further, we discuss self-adjoint of composition and weighted composition operators. In section 3, we study co-isometric composition and weighted composition operators on $F_{\psi}^2.$

\section{Self-adjoint composition and weighted composition operators} 
Recall that a bounded linear operator $S$ on a Hilbert space $H$ is self-adjoint if $S^*=S,$ where $S^{*}$ is the adjoint of $S.$ In the following result $\Gamma_{1}$ and $\Gamma_{2}$ are taken according to the characterization obtained by Qin \cite{IJPA} for bounded compositions operators $C_{\Gamma_1}$ and $C_{\Gamma_2}$ on $F_{\psi}^2,$ respectively (in particular \cite{Bdd Composition op on Fock space Cn} for Fock space of $\mathbb {C}^{n}$). In this result we obtain characterization for adjoint of a composition operator to be composition operator on $F_{\psi}^2.$

\begin{theorem} \label{Com Op isometry iff mod(a)=1}
Let $\Gamma_{1}(z)=C_{1}z+D_{1}$ and $\Gamma_{2}(z)=C_{2}z+D_{2}$ for some $C_{1}, C_{2} \in \mathcal{M}(n)$ with $||C_{1}||\leq 1, ||C_{2}||\leq 1$ and $D_{1}, D_{2} \in \mathbb{C}^n$ such that $C_{\Gamma_{1}}$ and $C_{\Gamma_{2}}$ are bounded on $F_{\psi}^2,$ respectively. Then, the adjoint of a composition operator $C_{\Gamma_{1}}$ is composition operator $C_{\Gamma_{2}}$ if and only if 
$$ D_{1}=0, D_{2}=0\,\, \text{and}\,\, C_{1}^*=C_{2}.$$
\end{theorem}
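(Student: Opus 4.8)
The plan is to test the operator identity $C_{\Gamma_1}^{*}=C_{\Gamma_2}$ against the reproducing kernels, which span a dense subspace of $F_{\psi}^{2}$, and thereby reduce the claim to a scalar functional equation that can be analysed coefficient by coefficient.

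First I would specialise the adjoint formula displayed just before the theorem to the unweighted case $U\equiv 1$, which gives $C_{\Gamma_1}^{*}K_{z,\psi}=K_{\Gamma_1(z),\psi}$ for every $z\in\mathbb{C}^n$. Since $\mathrm{Span}\{K_{p,\psi}:p\in\mathbb{C}^n\}$ is dense and both operators are bounded, the equality $C_{\Gamma_1}^{*}=C_{\Gamma_2}$ holds if and only if $K_{\Gamma_1(z),\psi}=C_{\Gamma_2}K_{z,\psi}$ for all $z$. Evaluating these two functions at an arbitrary $w$, inserting $(C_{\Gamma_2}K_{z,\psi})(w)=K_{z,\psi}(\Gamma_2(w))$ and the explicit kernel $K_{p,\psi}(w)=\frac{1}{(n-1)!}G^{(n-1)}(\langle w,p\rangle)$, I reduce the statement to
\[
G^{(n-1)}\bigl(\langle w,\Gamma_1(z)\rangle\bigr)=G^{(n-1)}\bigl(\langle \Gamma_2(w),z\rangle\bigr)\qquad\text{for all } w,z\in\mathbb{C}^n.
\]

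The heart of the argument, and the step I expect to be the main obstacle, is to pass from this equality of $G^{(n-1)}$-values to equality of their arguments. Here I would exploit that $G(t)=\sum_{r}t^{r}/c_{r}$ has all coefficients strictly positive (since $c_{r}>0$), so $G^{(n-1)}$ is a non-constant entire function with $(G^{(n-1)})'(0)=n!/c_{n}\neq 0$. Writing both arguments as holomorphic functions of $(w,\bar z)$, namely $\langle w,\Gamma_1(z)\rangle=\langle C_1^{*}w,z\rangle+\langle w,D_1\rangle$ and $\langle \Gamma_2(w),z\rangle=\langle C_2w,z\rangle+\langle D_2,z\rangle$, I would compare Taylor coefficients at the origin. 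The nonvanishing of $(G^{(n-1)})'(0)$ forces the two affine forms to agree. Crucially, the degree-one part splits into a term depending only on $w$ (carrying $D_1$) and one depending only on $\bar z$ (carrying $D_2$); two such terms can coincide only if each vanishes, which yields $D_1=0$ and $D_2=0$ simultaneously. The degree-two (bilinear) part then gives $\langle C_1^{*}w,z\rangle=\langle C_2w,z\rangle$ for all $w,z$, that is $C_1^{*}=C_2$.

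For the converse I would run the computation backwards: assuming $D_1=D_2=0$ and $C_1^{*}=C_2$, the adjoint relation $\langle C_1w,z\rangle=\langle w,C_1^{*}z\rangle$ gives $\langle w,\Gamma_1(z)\rangle=\langle \Gamma_2(w),z\rangle$ for all $w,z$, hence $K_{\Gamma_1(z),\psi}=C_{\Gamma_2}K_{z,\psi}$ on kernels; density together with boundedness of both operators then promotes this to $C_{\Gamma_1}^{*}=C_{\Gamma_2}$. The only point requiring care throughout is the legitimacy of the coefficient comparison, which rests entirely on $G^{(n-1)}$ being a genuinely non-constant entire function with nonzero linear coefficient at the origin.
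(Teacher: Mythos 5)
Your proposal is correct and takes essentially the same route as the paper: both reduce the operator identity $C_{\Gamma_1}^{*}=C_{\Gamma_2}$ to the kernel equation $K_{\Gamma_1(z),\psi}(w)=K_{z,\psi}(\Gamma_2(w))$ via density of $\mathrm{Span}\{K_{p,\psi}\}$ and boundedness, extract $D_1=D_2=0$ and $C_1^{*}=C_2$ from the explicit power-series form of the kernel, and verify the converse by the identical computation on kernels. The only (cosmetic) difference is in the extraction step: the paper specializes $z=0$, $w=0$ and then $w=D_1$, using positivity of the moments $c_r$, whereas you compare homogeneous Taylor parts at the origin using $(G^{(n-1)})'(0)=n!/c_n\neq 0$, which kills $D_1$ and $D_2$ simultaneously and then gives $C_1^{*}=C_2$ from the bilinear part.
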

\begin{proof} 
Let $C_{\Gamma_{1}}^{*}=C_{\Gamma_{2}}.$  Then, for all $z,w \in\mathbb{C}^n $
\begin{align}
(C_{\Gamma_{1}}^{*}K_{z,\psi})(w)&=(C_{\Gamma_{2}}K_{z,\psi})(w) \notag \\
K_{\Gamma_{1}(z),\psi}(w)&=K_{z,\psi}(\Gamma_{2}(w)). \label{selfadjoint comp eq 1}
\end{align}
Since $\Gamma_{1}(0)=D_{1}$ putting $z=0$ in \eqref{selfadjoint comp eq 1}, for all $w\in \mathbb{C}^n$ we get
\begin{align*}
K_{\Gamma_{1}(0),\psi}(w)=\frac{1}{c_{n-1}},
\end{align*}
or
\begin{align*}
\frac{1}{(n-1)!}\sum_{r=n-1}^{\infty} \frac{r(r-1) \cdots(r-n+2)}{c_{r}} \langle w, D_{1} \rangle^{r-n+1} =\frac{1}{c_{n-1}}.
\end{align*}  
Since $c_{r}>0$ for all $r \geq 0$ and the above equation holds for all $w\in \mathbb{C}^n$ in particular, if we take $w=D_{1}$ then we get $D_{1}=0.$ Putting $w=0$ in equation \eqref{selfadjoint comp eq 1}, for all $z\in \mathbb{C}^n$  we get
\begin{align*}
\frac{1}{c_{n-1}} &= K_{z,\psi}(\Gamma_{2}(0)) 
\end{align*}
or
\begin{align*}
\frac{1}{c_{n-1}}&=\frac{1}{(n-1)!}\sum_{r=n-1}^{\infty} \frac{r(r-1) \cdots(r-n+2)}{c_{r}} \langle D_{2}, z \rangle^{r-n+1}.
\end{align*}
Thus, $D_{2}=0.$ Since $D_{1}=0$ and $D_{2}=0,$ therefore, $\Gamma_{1}(z)=C_{1}z$ and $\Gamma_{2}(z)=C_{2}z,$ respectively. From equation \eqref{selfadjoint comp eq 1} it follows that, for all $z,w \in\mathbb{C}^n$  
\begin{align*}  
 \frac{1}{(n-1)!}\sum_{r=n-1}^{\infty} \frac{r(r-1) \cdots(r-n+2)}{c_{r}} \langle w, C_{1}z \rangle^{r-n+1}&= \frac{1}{(n-1)!}\sum_{r=n-1}^{\infty} \frac{r(r-1) \cdots(r-n+2)}{c_{r}} \langle C_{2}w, z \rangle^{r-n+1}, 
\end{align*}
or
\begin{align*} 
 \sum_{r=n-1}^{\infty} \frac{r(r-1) \cdots(r-n+2)}{c_{r}} \langle C_{1}^*w, z \rangle^{r-n+1}&= \sum_{r=n-1}^{\infty} \frac{r(r-1) \cdots(r-n+2)}{c_{r}} \langle C_{2}w, z \rangle^{r-n+1}, 
 \end{align*}
Thus, $C_{1}^*w=C_{2}w$ for all $w\in \mathbb{C}^n$ and hence $C_{1}^*=C_{2} $\\ 
 Conversely, let $\Gamma_{1}(z)=C_{1}z, \Gamma_{2}(z)=C_{2}z$ and $C_{1}^*=C_{2} .$ From the boundedness of $C_{\Gamma_{1}}$ and $C_{\Gamma_{2}}$ on $F_{\psi}^2$ and set $Span\{K_{z,\psi} : z \in\mathbb{C}^n \}$ being dense in $F_{\psi}^2,$ to prove $C_{\Gamma_{1}}^*=C_{\Gamma_{2}}$ it is enough to prove that $C_{\Gamma_{1}}^*K_{z,\psi}=C_{\Gamma_{2}}K_{z,\psi}$  for all $ z\in\mathbb{C}^n.$ For all $z,w \in\mathbb{C}^n$ 
\begin{align*}
C_{\Gamma_{1}}^*K_{z,\psi}(w)& = K_{\Gamma_{1}(z),\psi}(w) \\
                             &= \frac{1}{(n-1)!}\sum_{r=n-1}^{\infty} \frac{r(r-1) \cdots(r-n+2)}{c_{r}} \langle w, C_{1}z \rangle^{r-n+1} \\
                             &= \frac{1}{(n-1)!}\sum_{r=n-1}^{\infty} \frac{r(r-1) \cdots(r-n+2)}{c_{r}} \langle C_{1}^*w, z \rangle^{r-n+1} \\
                             &= \frac{1}{(n-1)!}\sum_{r=n-1}^{\infty} \frac{r(r-1) \cdots(r-n+2)}{c_{r}} \langle  C_{2}w,z \rangle^{r-n+1} \\
                             &= K_{z,\psi}(\Gamma_{2}(w)) \\
                             &=(C_{\Gamma_{2}}K_{z,\psi})(w).
\end{align*} 
Thus, $C_{\Gamma_{1}}^*=C_{\Gamma_{2}}.$ 
\end{proof}
The following result characterizes self-adjoint composition operator on $F_{\psi}^2:$
\begin{corollary}
Let $\Gamma(z)=Cz+D$ for some $C \in \mathcal{M}(n)$ with $||C||\leq 1$ and $D \in \mathbb{C}^n$  such that $C_{\Gamma}$ is bounded on $F_{\psi}^2.$ Then, $C_{\Gamma}$ is self-adjoint if and only if 
$$ D=0\,\, \text{and}\,\, C^*=C.$$
\end{corollary}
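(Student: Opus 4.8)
The plan is to obtain this statement directly from Theorem~\ref{Com Op isometry iff mod(a)=1} by specializing to the diagonal case $\Gamma_1 = \Gamma_2 = \Gamma$. Self-adjointness of $C_{\Gamma}$ means precisely $C_{\Gamma}^* = C_{\Gamma}$, that is, the adjoint of the composition operator $C_{\Gamma}$ is again the composition operator $C_{\Gamma}$ itself. Setting $C_1 = C_2 = C$ and $D_1 = D_2 = D$, I first observe that the two boundedness hypotheses of the theorem collapse into the single assumption that $C_{\Gamma}$ is bounded on $F_{\psi}^2$, which is exactly what is given here, and the norm bound $\|C\| \leq 1$ is likewise in force. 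Thus all the hypotheses of the theorem are met by the single affine map $\Gamma$.

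With this identification, Theorem~\ref{Com Op isometry iff mod(a)=1} asserts that $C_{\Gamma}^* = C_{\Gamma}$ holds if and only if $D_1 = 0$, $D_2 = 0$, and $C_1^* = C_2$. Translating back through $C_1 = C_2 = C$ and $D_1 = D_2 = D$, the first two conditions both read $D = 0$, while the condition $C_1^* = C_2$ becomes $C^* = C$. This yields the claimed equivalence at once, with no further computation required.

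Since every step is a direct specialization, there is essentially no obstacle to overcome; the only thing one must genuinely verify is that taking $\Gamma_1 = \Gamma_2 = \Gamma$ is legitimate, i.e.\ that the combined hypotheses of the theorem reduce exactly to those of the corollary, which they do. As an alternative I could instead reproduce the short self-contained argument: comparing $(C_{\Gamma}^* K_{z,\psi})(w) = K_{\Gamma(z),\psi}(w)$ with $(C_{\Gamma} K_{z,\psi})(w) = K_{z,\psi}(\Gamma(w))$, setting $z=0$ and then $w=0$ to force $D=0$ via positivity of the moments $c_r$, and finally matching the kernel expansions to extract $C^* = C$, exactly as in the proof of the theorem. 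However, presenting the result as an immediate consequence of Theorem~\ref{Com Op isometry iff mod(a)=1} is cleaner, so I would state it that way.
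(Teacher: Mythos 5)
Your proposal is correct and matches the paper's intent exactly: the paper states this corollary without proof, as an immediate specialization of Theorem~2.1 obtained by taking $\Gamma_{1}=\Gamma_{2}=\Gamma$ (so $C_{1}=C_{2}=C$, $D_{1}=D_{2}=D$), which is precisely your argument. Your check that the hypotheses collapse correctly, and your noted fallback to the theorem's kernel computation, are both sound.
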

We have the following example of self-adjoint composition operator on $F_{\psi}^2:$ 
\begin{example}
Let $C=(c_{ij})_{i,j=1}^{n}$ be $n \times n$ matrix where $c_{ij}=0$ except $c_{11}=1$ and let $\Gamma(z)=Cz=(z_{1},0,\cdots, 0)$ for $z=(z_{1},z_{2},\cdots,z_{n}) \in \mathbb{C}^n.$ Then, 
$$ ||C||^2=\sup_{(z_{1},z_{2},\cdots,z_{n}) \neq 0} \frac{|Cz|^2}{|z|^2}=\sup_{(z_{1},z_{2},\cdots,z_{n}) \neq 0} \frac{|z_{1}|^2}{|z_{1}|^2+ \cdots +|z_{n}|^2} \leq 1 $$
and
 
$$ ||C||^2 =\sup_{(z_{1},z_{2},\cdots,z_{n}) \neq 0} \frac{|Cz|^2}{|z|^2} \geq \frac{|C(1,0,\cdots,0)|^2}{|(1,0,\cdots,0)|^2}=1. $$
        
Thus, $||C||=1.$ For $w=(w_{1},w_{2},\cdots,w_{n})$ and $z=(z_{1},z_{2},\cdots,z_{n}) \in \mathbb{C}^n$
\begin{align*}
(C_{\Gamma}^{*}K_{z,\psi})(w)&=\frac{1}{(n-1)!}\sum_{r=n-1}^{\infty} \frac{r(r-1) \cdots(r-n+2)}{c_{r}} \langle w, \Gamma(z) \rangle^{r-n+1} \\
                &=\frac{1}{(n-1)!}\sum_{r=n-1}^{\infty} \frac{r(r-1) \cdots(r-n+2)}{c_{r}} ( w_{1} \bar{z_{1}} )^{r-n+1} \\
                &=\frac{1}{(n-1)!}\sum_{r=n-1}^{\infty} \frac{r(r-1) \cdots(r-n+2)}{c_{r}} \langle \Gamma(w), z \rangle^{r-n+1} \\
                &=(C_{\Gamma}K_{z,\psi})(w).                 
\end{align*}
Thus, $C_{\Gamma}$ is self-adjoint on $F_{\psi}^2.$ 
\end{example}

  
\begin{theorem} \label{thm necessary condition on adjoint weig com op}
Let $\Gamma_{1}(z)=C_{1}z+D_{1}$ and $\Gamma_{2}(z)=C_{2}z+D_{2}$ for some $C_{1}, C_{2} \in \mathcal{M}(n), D_{1}, D_{2} \in \mathbb{C}^n$ with $||C_{1}||\leq 1, ||C_{2}||\leq 1$ and $U_{1}, U_{2}$ be holomorphic functions on $\mathbb{C}^n$ such that $C_{U_{1},\Gamma_{1}}$ and $C_{U_{2},\Gamma_{2}}$ are bounded on $F_{\psi}^2,$ respectively. Then, the adjoint of $C_{U_{1},\Gamma_{1}}$ is $C_{U_{2},\Gamma_{2}}$ if and only if 
\begin{equation} \label{first condition for adjoint of weighted is weighted}
U_{1}(z)=c_{n-1}U_{1}(0)K_{\Gamma_{2}(0),\psi}(z) \,\, \text{and} \,\, U_{2}(z)=c_{n-1}\overline{U_{1}(0)}K_{\Gamma_{1}(0),\psi}(z) \,\, \text{for all} \,\, z \in \mathbb{C}^n
\end{equation}
and exactly one of the following holds
\begin{itemize}
\item[(i)] If $U_{1}(0)=0$ then  $U_{1}\equiv 0,$ $U_{2}\equiv 0$ and consequently $C_{U_{1},\Gamma_{1}} \equiv 0,$ $C_{U_{2},\Gamma_{2}} \equiv 0.$ \\
\item[(ii)] If $U_{1}(0) \neq 0$ then 
\begin{equation} \label{adjoint of weghted com op}
K_{z,\psi}(\Gamma_{2}(0)) K_{\Gamma_{1}(z),\psi}(w)= K_{\Gamma_{1}(0),\psi}(w) K_{z,\psi}(\Gamma_{2}(w))  \,\,\,\, \text{for all}\,\, z,w\in \mathbb{C}^n. 
\end{equation}
\end{itemize}
\end{theorem}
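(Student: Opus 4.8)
The plan is to reduce the desired operator identity $C_{U_1,\Gamma_1}^*=C_{U_2,\Gamma_2}$ to a single scalar identity on reproducing kernels and then read off the stated conditions by specializing the two free variables. Using the adjoint formula $C_{U,\Gamma}^*K_{z,\psi}=\overline{U(z)}K_{\Gamma(z),\psi}$ derived in the introduction, together with the boundedness of both operators and the density of $\mathrm{Span}\{K_{z,\psi}:z\in\mathbb{C}^n\}$ in $F_\psi^2$, the equality $C_{U_1,\Gamma_1}^*=C_{U_2,\Gamma_2}$ holds if and only if the two operators agree on every kernel, that is,
\begin{equation*}
\overline{U_1(z)}\,K_{\Gamma_1(z),\psi}(w)=U_2(w)\,K_{z,\psi}(\Gamma_2(w))\qquad\text{for all }z,w\in\mathbb{C}^n.
\end{equation*}
I will call this the kernel identity; the whole proof consists of analyzing it with the help of the conjugation rule \eqref{property of conjugate reproducing kernel} and the normalization $K_{0,\psi}\equiv K_{p,\psi}(0)=1/c_{n-1}$.

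For the forward direction I would extract the two formulas in \eqref{first condition for adjoint of weighted is weighted} by setting $z=0$ and then $w=0$ in the kernel identity. Putting $z=0$ makes the right-hand kernel collapse to $K_{0,\psi}(\Gamma_2(w))=1/c_{n-1}$ and yields the formula for $U_2$; putting $w=0$ makes the left-hand kernel collapse to $K_{\Gamma_1(z),\psi}(0)=1/c_{n-1}$, and after substituting the already-found value $U_2(0)=\overline{U_1(0)}$, conjugating, and rewriting $\overline{K_{z,\psi}(\Gamma_2(0))}$ as $K_{\Gamma_2(0),\psi}(z)$ via \eqref{property of conjugate reproducing kernel}, it yields the formula for $U_1$. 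The dichotomy is then immediate: if $U_1(0)=0$ the two formulas force $U_1\equiv0$ and $U_2\equiv0$, hence $C_{U_1,\Gamma_1}\equiv0$ and $C_{U_2,\Gamma_2}\equiv0$, giving (i); if $U_1(0)\neq0$, I substitute both formulas back into the kernel identity and cancel the common nonzero scalar $c_{n-1}\overline{U_1(0)}$ to obtain \eqref{adjoint of weghted com op}, giving (ii). Since $U_1(0)$ is either zero or nonzero, exactly one of (i), (ii) applies.

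For the converse, the same density-and-boundedness reduction shows it suffices to verify the kernel identity. Under (i) both sides vanish and the claim is trivial. Under (ii) I substitute the two formulas of \eqref{first condition for adjoint of weighted is weighted} into the left and right sides of the kernel identity; each side acquires the common factor $c_{n-1}\overline{U_1(0)}$, and the remaining equality is precisely \eqref{adjoint of weghted com op}, which holds by hypothesis.

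The computation is essentially mechanical, so the only real point of care is the bookkeeping with the conjugation identity \eqref{property of conjugate reproducing kernel}. The two sides of the kernel identity naturally present the reproducing kernel with its arguments in opposite roles --- for instance $K_{z,\psi}(\Gamma_2(0))$ on one side against $K_{\Gamma_2(0),\psi}(z)$ on the other --- and one must also keep track of when $U_1$ appears versus its conjugate $\overline{U_1}$; getting these conjugations consistent is where an error would most likely creep in, but there is no genuine analytic obstacle beyond that.
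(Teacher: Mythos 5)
Your proposal is correct and follows essentially the same route as the paper: reduce $C_{U_1,\Gamma_1}^*=C_{U_2,\Gamma_2}$ to the kernel identity $\overline{U_1(z)}\,K_{\Gamma_1(z),\psi}(w)=U_2(w)\,K_{z,\psi}(\Gamma_2(w))$ via density of the kernels, extract the two weight formulas by specializing $z=0$ and $w=0$ (using $K_{0,\psi}\equiv K_{p,\psi}(0)=1/c_{n-1}$ and the conjugation rule \eqref{property of conjugate reproducing kernel}), and split into the cases $U_1(0)=0$ and $U_1(0)\neq 0$ by cancelling the scalar $c_{n-1}\overline{U_1(0)}$. The only difference from the paper is the inessential order of specialization (you set $z=0$ first to get $U_2$, the paper sets $w=0$ first to get $U_1$), and your handling of the conjugations is consistent throughout.
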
 
\begin{proof}
Let $C_{U_{1},\Gamma_{1}}^*=C_{U_{2},\Gamma_{2}}.$ Then, for all $z,w\in \mathbb{C}^n$
\begin{align*}
(C_{U_{1},\Gamma_{1}}^*K_{z,\psi})(w)&=(C_{U_{2},\Gamma_{2}}K_{z,\psi})(w) \notag
\end{align*}
or
\begin{align}
\overline{U_{1}(z)} K_{\Gamma_{1}(z),\psi}(w)&=U_{2}(w)K_{z,\psi}(\Gamma_{2}(w)). \label{adjoint of weigh comp is wei adjoint 1}
\end{align}
Putting $w=0$ we get  
\begin{align*}
\overline{U_{1}(z)} &=c_{n-1}  U_{2}(0)K_{z,\psi}(\Gamma_{2}(0)) \,\,\,\, \text{for all}\, z\in \mathbb{C}^n. 
\end{align*}
Putting $z=0$ we obtain $\overline{U_{1}(0)} =U_{2}(0).$ Thus, 
\begin{equation}\label{adjoint of weigh comp is wei adjoint 2}
\overline{U_{1}(z)} =c_{n-1} \overline{U_{1}(0)}\, K_{z,\psi}(\Gamma_{2}(0)) \,\,\,\,  \text{for all}\,\, z\in \mathbb{C}^n.
\end{equation}
Taking conjugate on both the sides, we obtain $U_{1}(z) =c_{n-1} U_{1}(0) \, K_{\Gamma_{2}(0),\psi}(z).$
Equations (\ref{adjoint of weigh comp is wei adjoint 1}) and (\ref{adjoint of weigh comp is wei adjoint 2}) together imply that
\begin{equation} \label{adjoint of weigh comp is wei adjoint 3}
c_{n-1}  \overline{U_{1}(0)}\, K_{z,\psi}(\Gamma_{2}(0)) K_{\Gamma_{1}(z),\psi}(w)=U_{2}(w)K_{z,\psi}(\Gamma_{2}(w)) \,\,\,\,  \text{for all}\,\, z,w\in \mathbb{C}^n.
\end{equation}
Putting $z=0$ we get 
\begin{align} \label{adjoint of weigh comp is wei adjoint 4}
c_{n-1} \overline{U_{1}(0)}    K_{\Gamma_{1}(0),\psi}(w)&=U_{2}(w) \,\,\,\,  \text{for all}\,\, w\in \mathbb{C}^n.
\end{align}
Equations (\ref{adjoint of weigh comp is wei adjoint 3}) and (\ref{adjoint of weigh comp is wei adjoint 4}) together imply that
\begin{equation} \label{adjoint of weigh comp is wei adjoint 5}
\overline{U_{1}(0)} K_{z,\psi}(\Gamma_{2}(0)) K_{\Gamma_{1}(z),\psi}(w)=\overline{U_{1}(0)} K_{\Gamma_{1}(0),\psi}(w) K_{z,\psi}(\Gamma_{2}(w))  \,\,\,\, \text{for all}\,\, z,w\in \mathbb{C}^n.
\end{equation}
Case I. If $U_{1}(0)=0$ then from equations (\ref{adjoint of weigh comp is wei adjoint 2}) and (\ref{adjoint of weigh comp is wei adjoint 4}) we get that $U_{1}\equiv 0$ and $U_{2}\equiv 0.$\\
Case II. If $U_{1}(0)\neq 0$ then equation (\ref{adjoint of weigh comp is wei adjoint 5}) implies 
\begin{equation*} \label{adjoint of weigh comp is wei adjoint 6}
 K_{z,\psi}(\Gamma_{2}(0)) K_{\Gamma_{1}(z),\psi}(w)= K_{\Gamma_{1}(0),\psi}(w) K_{z,\psi}(\Gamma_{2}(w))  \,\,\,\, \text{for all}\,\, z,w\in \mathbb{C}^n.
\end{equation*}
Conversely, let $U_{1}$ and $U_{2}$ be as it is in equation \eqref{first condition for adjoint of weighted is weighted}. If $U_{1}(0)=0,$ then $U_{1}\equiv 0,$ $U_{2}\equiv 0$ and the result holds trivially. Now, assume that $U_{1}(0)\neq 0$ and equation \eqref{adjoint of weghted com op} is satisfied. For all $z,w \in \mathbb{C}^n$
\begin{align*}
(C_{U_{1},\Gamma_{1}}^*K_{z,\psi})(w)&=\overline{U_{1}(z)} K_{\Gamma_{1}(z),\psi}(w)  \\
             &=c_{n-1} \overline{U_{1}(0)}\, K_{z,\psi}(\Gamma_{2}(0)) K_{\Gamma_{1}(z),\psi}(w). 
 \end{align*}
Using equation \eqref{adjoint of weghted com op} for all $z,w \in \mathbb{C}^n$ we get       
 \begin{align*}            
(C_{U_{1},\Gamma_{1}}^*K_{z,\psi})(w) &=c_{n-1} \overline{U_{1}(0)} \, K_{\Gamma_{1}(0),\psi}(w) K_{z,\psi}(\Gamma_{2}(w)) \\
             &=U_{2}(w) K_{z,\psi}(\Gamma_{2}(w)) \\
             &=(C_{U_{2},\Gamma_{2}}K_{z,\psi})(w) .
\end{align*}
Hence, $C_{U_{1},\Gamma_{1}}^*K_{z,\psi}=C_{U_{2},\Gamma_{2}}K_{z,\psi}$ for all $z\in \mathbb{C}^n$, therefore, $C_{U_{1},\Gamma_{1}}^*=C_{U_{2},\Gamma_{2}}.$
\end{proof}
 
\begin{theorem} \label{thm general case self adjoint  C z+D Both C,D nonzero}
Let $\Gamma(z)=Cz+D$ for some invertible matrix $ C \in \mathcal{M}(n), D\neq 0 \in \mathbb{C}^n$ with $||C||\leq 1$ and $U$ be holomorphic function on $\mathbb{C}^n$ such that $C_{U,\Gamma}$ is bounded on $F_{\psi}^2.$ If weighted composition operator $C_{U,\Gamma}$ is self-adjoint, then
$$ U(z)=\alpha  K_{\Gamma(0),\psi}(z)\,\, \text{where} \,\, \alpha=c_{n-1} \overline{U(0)} \,\, \text{is a real number} $$
and exactly one of the following holds
\begin{itemize}
\item[(i)] If $U(0)=0,$ then $U\equiv 0$ and consequently $C_{U,\Gamma} \equiv 0.$ \\
\item[(ii)] If $U(0)\neq 0,$ then $C^{*}=C$ only if it satisfy equation \eqref{general case self adjoint thm last eq}.
\end{itemize}
\end{theorem}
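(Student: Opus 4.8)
The plan is to read this off from the preceding Theorem~\ref{thm necessary condition on adjoint weig com op} by specializing to $U_1=U_2=U$ and $\Gamma_1=\Gamma_2=\Gamma$ (so $C_1=C_2=C$, $D_1=D_2=D$), since $C_{U,\Gamma}$ is self-adjoint exactly when $C_{U,\Gamma}^{*}=C_{U,\Gamma}$. Under this specialization the two identities in \eqref{first condition for adjoint of weighted is weighted} hold simultaneously, namely $U(z)=c_{n-1}U(0)\,K_{\Gamma(0),\psi}(z)$ and $U(z)=c_{n-1}\overline{U(0)}\,K_{\Gamma(0),\psi}(z)$. Subtracting the two and using that $K_{\Gamma(0),\psi}$ is not identically zero (indeed $K_{\Gamma(0),\psi}(0)=1/c_{n-1}\neq 0$, and $c_{n-1}$ is a positive real number) forces $U(0)=\overline{U(0)}$, so $U(0)$ is real; hence $\alpha:=c_{n-1}\overline{U(0)}=c_{n-1}U(0)$ is real and $U(z)=\alpha\,K_{\Gamma(0),\psi}(z)$, which is the asserted form of $U$.

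The dichotomy is then inherited from the two cases of Theorem~\ref{thm necessary condition on adjoint weig com op}. In case (i), $U(0)=0$ gives $\alpha=0$, so $U\equiv 0$ and consequently $C_{U,\Gamma}\equiv 0$. In case (ii), $U(0)\neq 0$, and specializing \eqref{adjoint of weghted com op} to $\Gamma_1=\Gamma_2=\Gamma$ yields
\begin{equation*}
K_{z,\psi}(\Gamma(0))\,K_{\Gamma(z),\psi}(w)=K_{\Gamma(0),\psi}(w)\,K_{z,\psi}(\Gamma(w))\qquad\text{for all }z,w\in\mathbb{C}^n.
\end{equation*}
I would then substitute $\Gamma(z)=Cz+D$, $\Gamma(0)=D$, and the explicit kernel $K_{p,\psi}(z)=\tfrac{1}{(n-1)!}G^{(n-1)}(\langle z,p\rangle)$, together with the adjoint relations $\langle w,Cz\rangle=\langle C^{*}w,z\rangle$ and $\langle Cw,z\rangle=\langle w,C^{*}z\rangle$, so as to rewrite this identity purely in terms of the scalars $\langle D,z\rangle$, $\langle w,D\rangle$, $\langle C^{*}w,z\rangle$ and $\langle Cw,z\rangle$; this is exactly equation~\eqref{general case self adjoint thm last eq}.

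It remains to extract $C^{*}=C$ from that functional equation. The idea is to expand both sides in powers of the entries of $w$ and $\bar z$ about $z=w=0$ and to compare the coefficient of the mixed term that is simultaneously linear in $w$ and linear in $\bar z$. Writing $F=\tfrac{1}{(n-1)!}G^{(n-1)}$, a short bookkeeping shows that the only bilinear contributions are $F'(0)^{2}\,\langle D,z\rangle\langle w,D\rangle+F(0)F'(0)\,\langle C^{*}w,z\rangle$ on the left and $F'(0)^{2}\,\langle D,z\rangle\langle w,D\rangle+F(0)F'(0)\,\langle Cw,z\rangle$ on the right, so the terms built from the first-order factors $\langle D,z\rangle$ and $\langle w,D\rangle$ are identical and cancel. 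Since $F(0)=1/c_{n-1}$ and $F'(0)=n/c_{n}$ are both nonzero (again by positivity of the moments), this leaves $\langle C^{*}w,z\rangle=\langle Cw,z\rangle$ for all $z,w\in\mathbb{C}^n$, whence $C^{*}=C$. I expect this last step to be the main obstacle: in the classical Fock space $G(t)=e^{t}$ one simply equates exponents, but for a general weight $\psi$ the function $G$ is only a generic entire function determined by the moments $c_{r}$, so the argument must proceed by matching Taylor coefficients and by checking that the bilinear coupling term is not obscured by the first-order factors — which is precisely where the hypotheses that $C$ be invertible and $D\neq 0$ (keeping all four arguments genuinely present) and that the moments be positive are used.
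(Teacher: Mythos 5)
Your first half coincides with the paper's proof: both specialize Theorem~\ref{thm necessary condition on adjoint weig com op} with $U_1=U_2=U$ and $\Gamma_1=\Gamma_2=\Gamma$, deduce $U(0)=\overline{U(0)}$ from \eqref{first condition for adjoint of weighted is weighted} (the paper puts $z=0$; your subtraction argument is equivalent, using $K_{\Gamma(0),\psi}(0)=1/c_{n-1}\neq 0$), and settle case (i) identically. In case (ii) you diverge, and one claim is wrong as stated: rewriting the full identity \eqref{adjoint of weghted com op} in terms of $\langle D,z\rangle$, $\langle w,D\rangle$, $\langle C^{*}w,z\rangle$, $\langle Cw,z\rangle$ is \emph{not} equation \eqref{general case self adjoint thm last eq}. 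That equation is the single scalar relation the paper obtains by evaluating \eqref{adjoint of weghted com op} at the one point $z=w=-C^{-1}D$ (this evaluation, not any coefficient extraction, is where the paper uses invertibility of $C$ and $D\neq 0$, via $\Gamma(-C^{-1}D)=0$ and $\Gamma(0)=D$), and it involves only $\langle (C^{*})^{-1}D,D\rangle$ and $\langle C^{-1}D,D\rangle$; the $2\times 2$ example following the theorem shows that this one-point condition by itself does not force $C$ Hermitian, which is precisely why the paper stops short of concluding $C^{*}=C$. Your Taylor-coefficient argument, by contrast, is correct and proves strictly more than the theorem asserts: writing $K_{p,\psi}(q)=F(\langle q,p\rangle)$ with $F=\frac{1}{(n-1)!}G^{(n-1)}$, the identity becomes $F(a)F(s+b)=F(b)F(t+a)$ with $a=\langle D,z\rangle$, $b=\langle w,D\rangle$, $s=\langle C^{*}w,z\rangle$, $t=\langle Cw,z\rangle$; both sides are entire in $(w,\bar z)$, the bi-homogeneous component linear in each of $w$ and $\bar z$ is $F'(0)^{2}ab+F(0)F'(0)s$ on the left and $F'(0)^{2}ab+F(0)F'(0)t$ on the right, and since $F(0)=1/c_{n-1}>0$ and $F'(0)=n/c_{n}>0$ this forces $s\equiv t$, i.e.\ $C^{*}=C$, for every admissible weight — and, contrary to your closing remark, with no use of $C^{-1}$ or of $D\neq 0$ at all; those hypotheses matter only for the paper's route. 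Since $C^{*}=C$ makes the two sides of \eqref{general case self adjoint thm last eq} identical, your conclusion implies the paper's, and it does not contradict the paper's example, which exhibits a non-Hermitian $C$ satisfying the one-point condition but no actual self-adjoint $C_{U,\Gamma}$ built from it. In short: the paper's single kernel evaluation buys a concrete, checkable necessary condition, while your power-series bookkeeping buys the sharp structural conclusion $C^{*}=C$ (extending the exponent-matching argument from the classical Fock case $G(t)=e^{t}$); just detach that conclusion from the label \eqref{general case self adjoint thm last eq}, which names a different, weaker equation.
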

\textbf{Proof.} Let $C_{U,\Gamma}$ be self-adjoint on $F_{\psi}^2.$  Then, taking $\Gamma_{1}=\Gamma=\Gamma_{2}$ and $U_{1}=U=U_{2}$ in Theorem \ref{thm necessary condition on adjoint weig com op}, equation \eqref{first condition for adjoint of weighted is weighted}  implies  
\begin{align*}
 U(z)&=c_{n-1} \overline{U(0)} K_{\Gamma(0),\psi}(z).  
\end{align*}
Putting $z=0,$ we get $U(0)= \overline{U(0)}$. Since $c_{n-1}$ is positive real number, therefore,
\begin{equation*} 
 U(z)=\alpha  K_{\Gamma(0),\psi}(z)\,\,\text{for all}\,\, z\in \mathbb{C}^n\,\, \text{where} \,\, \alpha=c_{n-1} \overline{U(0)} \,\, \text{and} \,\, \bar{\alpha}=\alpha.
\end{equation*}
Case I. For $U(0)=0$, clearly $U\equiv 0.$\\
Case II. For $U(0)\neq 0,$ equation \eqref{adjoint of weghted com op} implies 
\begin{align}
  K_{z,\psi}(\Gamma(0)) K_{\Gamma(z),\psi}(w)&=  K_{\Gamma(0),\psi}(w) K_{z,\psi}(\Gamma(w)) \,\,\,\, \text{for all}\,\, z,w\in \mathbb{C}^n. \notag
\end{align} 
Putting $z=-C^{-1}D,$ $w=-C^{-1}D$ and using the fact that $\Gamma(0)=D,$ $\Gamma\big(-C^{-1}D \big)=0$ and $(C^{-1})^*=(C^{*})^{-1}$  we get 
\begin{align}
\sum_{r=n-1}^{\infty} \frac{r(r-1) \cdots(r-n+2)}{c_{r}} \langle D, -C^{-1}D \rangle^{r-n+1} &=\sum_{r=n-1}^{\infty} \frac{r(r-1) \cdots(r-n+2)}{c_{r}} \langle -C^{-1}D, D \rangle^{r-n+1} \notag\\
 \sum_{r=n-1}^{\infty} \frac{r(r-1) \cdots(r-n+2)}{c_{r}} \langle (C^{*})^{-1}D,D \rangle^{r-n+1} &=\sum_{r=n-1}^{\infty} \frac{r(r-1) \cdots(r-n+2)}{c_{r}} \langle C^{-1}D, D \rangle^{r-n+1}. \label{general case self adjoint thm last eq}
\end{align} 
Thus, $C^{*}=C$ only if it satisfy equation \eqref{general case self adjoint thm last eq}.\\ 

In Theorem \ref{thm general case self adjoint  C z+D Both C,D nonzero} for a matrix $C$ to satisfy equation \eqref{general case self adjoint thm last eq} it is not necessary for $C$ to be Hermitian matrix, that is, $C^*=C.$ This can be verified from the following example for $n=2$.

\begin{example}
Let $C=
\begin{pmatrix}
0 & \frac{1}{2} \\
-\frac{1}{2} & 0 
\end{pmatrix}$ and  
 $D=\begin{pmatrix}
1 \\
0\\
\end{pmatrix}
$ in Theorem \ref{thm general case self adjoint  C z+D Both C,D nonzero}. First we will show that $||C||\leq \frac{1}{\sqrt{2}}.$ Clearly, $Cz=(\frac{1}{2}z_2, -\frac{1}{2}z_{1})$ for $z=(z_{1},z_{2}) \in \mathbb{C}^2.$ Now consider
$$|Cz|^2=\frac{1}{4}|z_{1}|^2+\frac{1}{4}|z_{2}|^2 < \frac{1}{2} (|z_{1}|^2+|z_{2}|^2)=\frac{1}{2}|z|^2 \,\,\, \text{for} \,\, z=(z_{1},z_{2}) \in \mathbb{C}^2. $$
Thus,$$ ||C||^2=\sup_{(z_{1},z_{2}) \neq 0} \frac{|Cz|^2}{|z|^2} \leq \frac{1}{2}. $$
Also, $C^{-1}=\begin{pmatrix}
0 & -2 \\
2 & 0 
\end{pmatrix}$ 
and  
$(C^{*})^{-1}=\begin{pmatrix}
0 & 2 \\
-2 & 0 
\end{pmatrix}.$ Since $C^{-1}D=(0,2)$ and $(C^{*})^{-1}D=(0,-2),$ therefore,
$$\langle (C^{*})^{-1}D,D \rangle=0= \langle C^{-1}D, D \rangle. $$
Hence, $C$ is a non-Hermitian matrix satisfying equation \eqref{general case self adjoint thm last eq}.
\end{example}

The following two results can be obtained in a similar manner as of Theorem \ref{thm general case self adjoint  C z+D Both C,D nonzero}: 
\begin{proposition} \label{thm general case self adjoint  C z}
Let $\Gamma(z)=C z$ for $C \in \mathcal{M}(n)$ with $||C||\leq 1$ and $U$ be holomorphic function on $\mathbb{C}^n$ such that $C_{U,\Gamma}$ is bounded on $F_{\psi}^2.$ Then, the  weighted composition operator $C_{U,\Gamma}$ is self-adjoint if and only if
$$ U(z)=\overline{U(0)} $$
and exactly one of the following holds
\begin{itemize}
\item[(i)] If $U(0)=0,$ then $U\equiv 0$ and consequently $C_{U,\Gamma} \equiv 0.$ 
\item[(ii)] If $U(0)\neq 0,$ then  $C$ is self adjoint.
\end{itemize}
\end{proposition}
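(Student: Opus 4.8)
The plan is to specialize Theorem \ref{thm necessary condition on adjoint weig com op} to the self-adjoint case by setting $\Gamma_{1}=\Gamma_{2}=\Gamma$ and $U_{1}=U_{2}=U$, exactly as in the proof of Theorem \ref{thm general case self adjoint  C z+D Both C,D nonzero}, and then to exploit the fact that here $D=0$, so that $\Gamma(0)=C\cdot 0=0$. First I would record that $K_{0,\psi}(z)\equiv \tfrac{1}{c_{n-1}}$, so that the first relation in \eqref{first condition for adjoint of weighted is weighted} collapses to
$$U(z)=c_{n-1}\overline{U(0)}\,K_{0,\psi}(z)=\overline{U(0)},$$
a constant. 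Evaluating at $z=0$ forces $U(0)=\overline{U(0)}$, so $U(0)$ is real and $U\equiv \overline{U(0)}$ on all of $\mathbb{C}^n$; this already establishes the first assertion of the proposition.

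Next I would split into the two cases. Case (i) is immediate: if $U(0)=0$ then the constant $U$ is identically zero, so $C_{U,\Gamma}\equiv 0$, which is trivially self-adjoint. For case (ii), with $U(0)\neq 0$, I would feed $\Gamma(0)=0$ into the bilinear identity \eqref{adjoint of weghted com op}. Because $K_{z,\psi}(\Gamma(0))=K_{z,\psi}(0)=\tfrac{1}{c_{n-1}}$ and $K_{\Gamma(0),\psi}(w)=K_{0,\psi}(w)=\tfrac{1}{c_{n-1}}$, the two constant prefactors cancel and \eqref{adjoint of weghted com op} reduces to the clean kernel identity $K_{\Gamma(z),\psi}(w)=K_{z,\psi}(\Gamma(w))$ for all $z,w\in\mathbb{C}^n$. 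Expanding both sides through the series representation of the reproducing kernel and using $\langle w,Cz\rangle=\langle C^{*}w,z\rangle$, this becomes
$$\sum_{r=n-1}^{\infty}\frac{r(r-1)\cdots(r-n+2)}{c_{r}}\langle C^{*}w,z\rangle^{r-n+1}=\sum_{r=n-1}^{\infty}\frac{r(r-1)\cdots(r-n+2)}{c_{r}}\langle Cw,z\rangle^{r-n+1}.$$
Comparing coefficients, exactly as in the proof of Theorem \ref{Com Op isometry iff mod(a)=1} (the term of degree one in $\bar z$ has nonzero coefficient since $c_{n}>0$), gives $C^{*}w=Cw$ for every $w$, hence $C^{*}=C$.

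The point I want to stress is why this produces a genuine equivalence, in contrast with Theorem \ref{thm general case self adjoint  C z+D Both C,D nonzero}, where only the weaker scalar condition \eqref{general case self adjoint thm last eq} survived. Since $D=0$ makes $\Gamma(0)=0$, the kernel factors evaluated at $\Gamma(0)$ degenerate to the same constant $\tfrac{1}{c_{n-1}}$ rather than to the nontrivial values $K_{z,\psi}(D)$ and $K_{D,\psi}(w)$; the bilinear condition therefore does not force a single diagonal substitution but remains valid for all $z,w$, which is precisely what is needed to recover the full Hermitian condition $C^{*}=C$ rather than a mere scalar identity. For the converse I would assume $U\equiv\overline{U(0)}=:\alpha$ with $\alpha$ real and $C^{*}=C$ (the case $U\equiv 0$ being trivial) and verify $C_{U,\Gamma}^{*}=C_{U,\Gamma}$ on the dense set $\mathrm{Span}\{K_{z,\psi}:z\in\mathbb{C}^n\}$: here $C_{U,\Gamma}^{*}K_{z,\psi}(w)=\overline{U(z)}K_{\Gamma(z),\psi}(w)=\alpha K_{Cz,\psi}(w)$ and $C_{U,\Gamma}K_{z,\psi}(w)=U(w)K_{z,\psi}(\Gamma(w))=\alpha K_{z,\psi}(Cw)$, and these agree because $\langle w,Cz\rangle=\langle C^{*}w,z\rangle=\langle Cw,z\rangle$.

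I expect the only genuinely delicate step to be the coefficient comparison in the forward direction, i.e.\ passing from the equality of the two power series to $\langle C^{*}w,z\rangle=\langle Cw,z\rangle$; this is justified by $c_{r}>0$ and is the same device already used in Theorem \ref{Com Op isometry iff mod(a)=1}. Everything else is a direct specialization of Theorem \ref{thm necessary condition on adjoint weig com op} together with the degeneration $\Gamma(0)=0$.
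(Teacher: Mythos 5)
Your proposal is correct and is precisely the argument the paper intends: the paper states this proposition without proof, remarking only that it can be obtained in the same manner as Theorem \ref{thm general case self adjoint  C z+D Both C,D nonzero}, i.e.\ by specializing Theorem \ref{thm necessary condition on adjoint weig com op} with $\Gamma_{1}=\Gamma_{2}=\Gamma$, $U_{1}=U_{2}=U$ and exploiting $\Gamma(0)=0$, which collapses the kernel factors to the constant $\tfrac{1}{c_{n-1}}$ and upgrades condition \eqref{adjoint of weghted com op} to the full identity $K_{\Gamma(z),\psi}(w)=K_{z,\psi}(\Gamma(w))$. The details you supply beyond the paper's one-line remark --- the degree-one homogeneous coefficient comparison (valid since $c_{n}>0$) yielding $C^{*}=C$, and the explicit verification of the converse on the dense span of reproducing kernels --- are sound and are the same devices already used in Theorems \ref{Com Op isometry iff mod(a)=1} and \ref{thm necessary condition on adjoint weig com op}.
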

\begin{proposition} \label{self adjoint constant case D}
Let $\Gamma(z)=D$ for  $D\in \mathbb{C}^n$  and $U$ be holomorphic function on $\mathbb{C}^n$ such that $C_{U,\Gamma}$ is bounded on $F_{\psi}^2.$ Then, the weighted composition operator $C_{U,\Gamma}$ is self-adjoint if and only if
 $$ U(z)=\alpha  K_{\Gamma(0)}(z) \,\,\, \text{for all}\,\, z \in\, \mathbb{C}^n,$$ 
 where $\alpha=c_{n-1} \overline{U(0)}$ is a real number.
\end{proposition}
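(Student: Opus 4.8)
The plan is to derive both directions from the general adjoint characterization of Theorem \ref{thm necessary condition on adjoint weig com op}, specialized to the self-adjoint, constant-symbol situation $\Gamma_1=\Gamma_2=\Gamma$ and $U_1=U_2=U$. For the forward implication, I would assume $C_{U,\Gamma}$ is self-adjoint and read off equation \eqref{first condition for adjoint of weighted is weighted}, which at once gives $U(z)=c_{n-1}U(0)\,K_{\Gamma(0),\psi}(z)$. Evaluating at $z=0$ and using $K_{\Gamma(0),\psi}(0)=\tfrac{1}{c_{n-1}}$ forces $U(0)=\overline{U(0)}$, so that $\alpha=c_{n-1}\overline{U(0)}$ is real and $U(z)=\alpha K_{\Gamma(0),\psi}(z)$, exactly as claimed.

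For the converse I would bypass the auxiliary relation \eqref{adjoint of weghted com op} and instead verify self-adjointness directly on reproducing kernels, which suffices because $Span\{K_{z,\psi}:z\in\mathbb{C}^n\}$ is dense in $F_{\psi}^2$ and $C_{U,\Gamma}$ is bounded. Starting from $U(z)=\alpha K_{\Gamma(0),\psi}(z)$ with $\alpha=\overline{\alpha}$, I would compute $(C_{U,\Gamma}^*K_{z,\psi})(w)=\overline{U(z)}K_{\Gamma(z),\psi}(w)$ and $(C_{U,\Gamma}K_{z,\psi})(w)=U(w)K_{z,\psi}(\Gamma(w))$. The decisive simplification is that $\Gamma\equiv D$ is constant, so $\Gamma(z)=\Gamma(w)=\Gamma(0)=D$ and $K_{\Gamma(z),\psi}=K_{\Gamma(w),\psi}=K_{D,\psi}$ irrespective of the argument. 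Substituting $\overline{U(z)}=\alpha\,\overline{K_{D,\psi}(z)}=\alpha K_{z,\psi}(D)$ via the conjugation identity \eqref{property of conjugate reproducing kernel}, both sides collapse to $\alpha K_{z,\psi}(D)K_{D,\psi}(w)$; hence $C_{U,\Gamma}^*K_{z,\psi}=C_{U,\Gamma}K_{z,\psi}$ for every $z$, and therefore $C_{U,\Gamma}^*=C_{U,\Gamma}$.

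The step carrying the real content is the converse, and in particular recognizing why a full biconditional is available here whereas Theorem \ref{thm general case self adjoint  C z+D Both C,D nonzero} yields only a necessary condition. In the general affine case the obstruction is the symmetry relation \eqref{adjoint of weghted com op} coupling $K_{\Gamma(z),\psi}(w)$ with $K_{z,\psi}(\Gamma(w))$, which translates into the genuine matrix constraint \eqref{general case self adjoint thm last eq}. When $\Gamma\equiv D$, however, that same relation degenerates to the tautology $K_{z,\psi}(D)K_{D,\psi}(w)=K_{D,\psi}(w)K_{z,\psi}(D)$ and so imposes no further restriction, leaving the reality of $\alpha$ (equivalently of $U(0)$) as the only condition. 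The case split $U(0)=0$ versus $U(0)\neq0$ of the earlier results is then absorbed automatically: the value $\alpha=0$ recovers $U\equiv0$ and $C_{U,\Gamma}\equiv0$, which is trivially self-adjoint and is itself of the asserted form.
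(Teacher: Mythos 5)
Your proposal is correct and follows essentially the route the paper itself indicates: the paper gives no written proof of this proposition, stating only that it ``can be obtained in a similar manner'' as the affine case, i.e., by specializing Theorem \ref{thm necessary condition on adjoint weig com op} to $\Gamma_1=\Gamma_2=\Gamma$ and $U_1=U_2=U$, which is what you do. One local slip in your necessity half should be repaired: you quote the first identity of \eqref{first condition for adjoint of weighted is weighted}, $U(z)=c_{n-1}U(0)K_{\Gamma(0),\psi}(z)$, and claim that evaluating at $z=0$ forces $U(0)=\overline{U(0)}$; since $K_{\Gamma(0),\psi}(0)=\tfrac{1}{c_{n-1}}$, that identity at $z=0$ returns only the tautology $U(0)=U(0)$. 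The reality of $U(0)$ comes from the \emph{second} identity in the same displayed equation, namely $U(z)=c_{n-1}\overline{U(0)}K_{\Gamma(0),\psi}(z)$ (the specialization of $U_2$), which at $z=0$ gives $U(0)=\overline{U(0)}$ --- exactly the step used in the paper's proof of the affine-case theorem. With that fixed, the rest is sound: your direct kernel verification of the converse, using \eqref{property of conjugate reproducing kernel} and the fact that $K_{\Gamma(z),\psi}=K_{D,\psi}$ independently of $z$, correctly collapses both $(C_{U,\Gamma}^*K_{z,\psi})(w)$ and $(C_{U,\Gamma}K_{z,\psi})(w)$ to $\alpha\, K_{z,\psi}(D)\,K_{D,\psi}(w)$, and density of $Span\{K_{z,\psi}\}$ plus boundedness finishes it; alternatively you could have invoked the sufficiency clause of Theorem \ref{thm necessary condition on adjoint weig com op} directly, since, as you rightly observe, condition \eqref{adjoint of weghted com op} degenerates to a tautology for constant $\Gamma$ --- which is precisely why a full biconditional holds here while the invertible affine case yields only a necessary condition via \eqref{general case self adjoint thm last eq}. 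Your remark that $\alpha=0$ absorbs the $U\equiv 0$ case is also correct.
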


\section{Co-isometry composition and weighted composition operators}
Recall that a bounded linear operator $S$ on a Hilbert space $H$ is co-isometry if $S^*$ is isometry or equivalently, $SS^*=I_{H}$ or  $||S^{*}x||=||x||$  or  $\langle S^{*}x, S^{*}y \rangle=\langle x, y \rangle$ for all $x, y \in H.$ The following result gives the necessary and sufficient condition for composition operator to be co-isometry:
\begin{theorem}  \label{composition op co-isometry}
Let $\Gamma(z)=C z-D$ for some matrix $ C \in \mathcal{M}(n), D \in \mathbb{C}^n$ with $||C||\leq 1$ such that $C_{\Gamma}$ is bounded on $F_{\psi}^2.$ Then, $C_{\Gamma}$ is co-isometry if and only if $D=0$ and $ C$ is unitary. 
\end{theorem}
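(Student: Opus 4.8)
The plan is to convert the operator identity $C_{\Gamma}C_{\Gamma}^{*}=I$ into a scalar functional equation in the reproducing kernels and then to exploit the strict positivity of the moments $c_{r}$. Since $Span\{K_{z,\psi}:z\in\mathbb{C}^{n}\}$ is dense in $F_{\psi}^{2}$ and $C_{\Gamma}C_{\Gamma}^{*}-I$ is bounded, $C_{\Gamma}$ is a co-isometry if and only if $\langle C_{\Gamma}^{*}K_{z,\psi},C_{\Gamma}^{*}K_{w,\psi}\rangle=\langle K_{z,\psi},K_{w,\psi}\rangle$ for all $z,w\in\mathbb{C}^{n}$. Using $C_{\Gamma}^{*}K_{z,\psi}=K_{\Gamma(z),\psi}$ (the $U\equiv 1$ case of the adjoint formula derived in the introduction) together with the reproducing identity $\langle K_{p,\psi},K_{q,\psi}\rangle=K_{p,\psi}(q)$, this condition rewrites, after cancelling the common factor $1/(n-1)!$, as
\[
G^{(n-1)}\!\big(\langle \Gamma(w),\Gamma(z)\rangle\big)=G^{(n-1)}\!\big(\langle w,z\rangle\big)\qquad\text{for all }z,w\in\mathbb{C}^{n}.
\]

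For necessity, the crucial idea is to specialize to $w=z$, which makes both arguments real and nonnegative: the equation becomes $G^{(n-1)}(|\Gamma(z)|^{2})=G^{(n-1)}(|z|^{2})$. Because every coefficient $r(r-1)\cdots(r-n+2)/c_{r}$ in the power series of $G^{(n-1)}$ is strictly positive (as $c_{r}>0$, and in particular the constant and linear coefficients are positive), the restriction of $G^{(n-1)}$ to $[0,\infty)$ is strictly increasing, hence injective. I would therefore conclude $|\Gamma(z)|=|z|$ for every $z$, i.e. $|Cz-D|^{2}=|z|^{2}$. Evaluating at $z=0$ forces $|D|^{2}=0$, so $D=0$; the remaining identity $\langle C^{*}Cz,z\rangle=\langle z,z\rangle$ then holds for all $z$, and since $C^{*}C-I$ is Hermitian this yields $C^{*}C=I$. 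In finite dimensions $C^{*}C=I$ already gives $CC^{*}=I$, so $C$ is unitary.

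For sufficiency I would simply reverse the computation: if $D=0$ and $C$ is unitary, then $\langle \Gamma(w),\Gamma(z)\rangle=\langle Cw,Cz\rangle=\langle C^{*}Cw,z\rangle=\langle w,z\rangle$, so the displayed functional equation holds identically. This gives $\langle C_{\Gamma}^{*}K_{z,\psi},C_{\Gamma}^{*}K_{w,\psi}\rangle=\langle K_{z,\psi},K_{w,\psi}\rangle$ for all $z,w$, whence $C_{\Gamma}^{*}$ is an isometry on the dense span of kernels and $C_{\Gamma}$ is a co-isometry.

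The main obstacle, and the step that genuinely drives the argument, is the passage from the complex-argument functional equation to the real identity $|\Gamma(z)|=|z|$; this is exactly where I rely on the restriction $w=z$ and on the strict monotonicity of $G^{(n-1)}$ on the positive half-line coming from positivity of the moments. Everything else—the reduction to kernels, the extraction of $D=0$ and $C^{*}C=I$, and the converse—is routine, the only minor point being the standard finite-dimensional fact that an isometric matrix is unitary.
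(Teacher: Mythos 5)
Your proposal is correct, and the necessity direction takes a genuinely different route from the paper's. The paper first establishes that $C$ must be invertible (co-isometry makes $C_{\Gamma}$ onto, and the isometry $C_{\Gamma}^{*}$ is injective, so $C_{\Gamma}^{*}(K_{z,\psi}-K_{w,\psi})=0$ forces $\Gamma$ injective), and then kills one side of the kernel identity by the special choice $w=C^{-1}D$, where $\Gamma(w)=0$, to conclude $C^{-1}D=0$ and hence $D=0$; only afterwards does it set $z=w$ to extract $C^{*}C=I$. You instead set $w=z$ immediately and exploit the strict monotonicity of $G^{(n-1)}$ on $[0,\infty)$ — its series has nonnegative coefficients and, since $c_{n}>0$, a strictly positive linear term, so $G^{(n)}>0$ on the half-line — to get $|\Gamma(z)|=|z|$ for all $z$, from which $D=0$ drops out at $z=0$. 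This bypasses the paper's invertibility detour entirely: you never need $\Gamma$ injective or $C^{-1}$ to exist. It also makes explicit a step the paper leaves terse: its passage from equality of the two power series at $z=w$ to $\langle C^{*}Cz,z\rangle=\langle z,z\rangle$ is justified precisely by the injectivity of $G^{(n-1)}$ on the nonnegative reals that you isolate, after which polarization (or your Hermitian remark, since $\langle (C^{*}C-I)z,z\rangle=0$ for all $z$ in a complex space forces $C^{*}C-I=0$) and finite-dimensionality give unitarity. The reduction of $C_{\Gamma}C_{\Gamma}^{*}=I$ to the kernel identity and the sufficiency computation coincide with the paper's. What the paper's longer route buys is a reusable structural observation (a co-isometric $C_{\Gamma}$ has injective symbol, and the same $w=C^{-1}D$ substitution reappears in its weighted theorem, where $C$ is assumed invertible); for this particular theorem your argument is shorter and more self-contained.
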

\textbf{Proof.} Let $C_{\Gamma}$ be co-isometry on $F_{\psi}^2.$ Then  $C_{\Gamma} C_{\Gamma}^*=I_{F_{\psi}^2}$ implies $C_{\Gamma}$ is onto and so $\Gamma$ cannot be constant, that is, matrix $C$ is nonzero matrix. First, we will prove that matrix $C$ to be invertible.
To prove $C \in \mathcal{M}(n)$ is invertible, it is sufficient to prove that $\Gamma$ is injective. Let $z,w \in \mathbb{C}^n$ such that $\Gamma(z)=\Gamma(w).$ Now consider 
$ C_{\Gamma}^*(K_{z,\psi}-K_{w,\psi})=K_{\Gamma(z),\psi}-K_{\Gamma(w),\psi}=0$
Thus, $(K_{z,\psi}-K_{w,\psi}) \in Ker(C_{\Gamma}^*).$ Since $C_{\Gamma}^*$ is isometry so injective, therefore, $K_{z,\psi}=K_{w,\psi}$ which implies $z=w.$
For all $z,w\in \mathbb{C}^n$ we have
\begin{align*}
(C_{\Gamma} C_{\Gamma}^*K_{z,\psi})(w)&=K_{z,\psi}(w), \notag 
\end{align*}
or
\begin{align}
\sum_{r=n-1}^{\infty} \frac{r(r-1) \cdots(r-n+2)}{c_{r}} \langle \Gamma(w), \Gamma(z) \rangle^{r-n+1} &=\sum_{r=n-1}^{\infty} \frac{r(r-1) \cdots(r-n+2)}{c_{r}} \langle w, z \rangle^{r-n+1}. \label{Com Coisometry eq 1}  
 \end{align} 
 Since $\Gamma(C^{-1}D)=0$ putting $w=C^{-1}D$ for all $z\in \mathbb{C}^n$  we get 
\begin{align*} 
\frac{1}{(n-1)!}\sum_{r=n-1}^{\infty} \frac{r(r-1) \cdots(r-n+2)}{c_{r}} \langle C^{-1}D, z \rangle^{r-n+1}=\frac{1}{c_{n-1}}, \notag 
\end{align*} 
equivalently
\begin{align*}
\frac{1}{(n-1)!}\sum_{r=n}^{\infty} \frac{r(r-1) \cdots(r-n+2)}{c_{r}} \langle C^{-1}D, z \rangle^{r-n+1}=0.
\end{align*}
Thus, $C^{-1}D=0$ and hence $D=0.$ This implies, $\Gamma(z)=Cz.$ Putting $z=w$ in \eqref{Com Coisometry eq 1} for all $z\in \mathbb{C}^n$ we get 
\begin{align*}
 \frac{1}{(n-1)!}\sum_{r=n-1}^{\infty} \frac{r(r-1) \cdots(r-n+2)}{c_{r}} \langle Cz, Cz \rangle^{r-n+1} &=\frac{1}{(n-1)!}\sum_{r=n-1}^{\infty} \frac{r(r-1) \cdots(r-n+2)}{c_{r}} \langle z, z \rangle^{r-n+1} \\
\sum_{r=n-1}^{\infty} \frac{r(r-1) \cdots(r-n+2)}{c_{r}} \langle C^*Cz, z \rangle^{r-n+1} &=\sum_{r=n-1}^{\infty} \frac{r(r-1) \cdots(r-n+2)}{c_{r}} \langle z, z \rangle^{r-n+1}  
 \end{align*}
 Thus, $C^*Cz=z$ for all $z\in \mathbb{C}^n$ and since $C\in \mathcal{M}(n), C$ is unitary.\\
 Conversely, let $\Gamma(z)=Cz$ and $C$ is unitary. For all $z,w \in \mathbb{C}^n$
\begin{align*} 
 (C_{\Gamma} C_{\Gamma}^*K_{z,\psi})(w)&=K_{\Gamma(z),\psi}(\Gamma(w)) \\
                                       &= \frac{1}{(n-1)!}\sum_{r=n-1}^{\infty} \frac{r(r-1) \cdots(r-n+2)}{c_{r}} \langle Cw, Cz \rangle^{r-n+1} \\
                                       &= \frac{1}{(n-1)!}\sum_{r=n-1}^{\infty} \frac{r(r-1) \cdots(r-n+2)}{c_{r}} \langle C^* Cw, z \rangle^{r-n+1} \\
                                       &= \frac{1}{(n-1)!}\sum_{r=n-1}^{\infty} \frac{r(r-1) \cdots(r-n+2)}{c_{r}} \langle w, z \rangle^{r-n+1} \\
                                       &=K_{z,\psi}(w).
\end{align*}
Thus, $C_{\Gamma}$ is co-isometry.\\

Note, for $\Gamma(z)=Cz$ and unitary matrix $C \in \mathcal{M}(n),$ since $|Cz|^2=|z|^2$ and $Cz=w$ has solution for each $w\in  \mathbb{C}^n ,$ therefore, we have
\begin{align*} 
||C_{\Gamma}f||^2 &=\int_{\mathbb{C}^n} |(f \circ \Gamma)(z)|^2 e^{-\psi(|z|^2) } d\nu(z) \\
                  &=\int_{\mathbb{C}^n} |f(C z)|^2 e^{-\psi(|z|^2) } d\nu(z) \\
                  &=\int_{\mathbb{C}^n} |f(C z)|^2 e^{-\psi(|Cz|^2) } d\nu(z) \\
                  &=\int_{\mathbb{C}^n} |f(z)|^2 e^{-\psi(|z|^2) } d\nu(z) \\
                  &=||f||^2
\end{align*} 
Thus, $C_{\Gamma}$ is isometry.\\

The above discussion along with Theorem \ref{composition op co-isometry} lead to the following result:
\begin{corollary}  \label{composition op unitary}
Let $\Gamma(z)=C z-D$ for some matrix $ C \in \mathcal{M}(n), D \in \mathbb{C}^n$ with $||C||\leq 1$ such that $C_{\Gamma}$ is bounded on $F_{\psi}^2.$ Then, composition operator $C_{\Gamma}$ is co-isometry if and only if it is a unitary operator.
\end{corollary}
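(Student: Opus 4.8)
The plan is to leverage Theorem~\ref{composition op co-isometry} together with the isometry computation recorded in the Note immediately preceding this corollary, since between them they already contain all the analytic content. Recall that a bounded operator $S$ on a Hilbert space is unitary precisely when it is simultaneously an isometry ($S^*S=I$) and a co-isometry ($SS^*=I$). Thus the statement reduces to showing that, within the class of composition operators $C_{\Gamma}$ with $\Gamma(z)=Cz-D$, being a co-isometry already forces being an isometry as well.

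The reverse implication is immediate: if $C_{\Gamma}$ is unitary, then $C_{\Gamma}C_{\Gamma}^*=I_{F_{\psi}^2}$ holds by definition, so $C_{\Gamma}$ is a co-isometry with no further argument required.

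For the forward implication I would first invoke Theorem~\ref{composition op co-isometry}, which guarantees that a co-isometric $C_{\Gamma}$ must satisfy $D=0$ and have $C$ unitary, so that $\Gamma(z)=Cz$ with $C$ unitary. I then appeal to the change-of-variables computation displayed in the Note following Theorem~\ref{composition op co-isometry}: using $|Cz|^2=|z|^2$ and the surjectivity of the map $z\mapsto Cz$ on $\mathbb{C}^n$, one obtains $\|C_{\Gamma}f\|^2=\|f\|^2$ for every $f\in F_{\psi}^2$, that is, $C_{\Gamma}$ is an isometry. Combining this with the standing hypothesis that $C_{\Gamma}$ is a co-isometry yields $C_{\Gamma}^*C_{\Gamma}=I_{F_{\psi}^2}=C_{\Gamma}C_{\Gamma}^*$, whence $C_{\Gamma}$ is unitary.

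There is no genuine obstacle here, since the hard analytic steps, namely the characterization of co-isometry in Theorem~\ref{composition op co-isometry} and the isometry estimate in the Note, are already established. The only point demanding a little care is the bookkeeping of which of $C_{\Gamma}$ and $C_{\Gamma}^*$ plays the role of the isometry in each of the equivalent formulations of \emph{co-isometry} given at the start of the section, so as not to conflate the condition $S^*S=I$ with $SS^*=I$. Once these definitions are correctly aligned, the corollary follows simply by combining the two earlier results.
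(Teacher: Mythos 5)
Your proposal is correct and follows essentially the same route as the paper: the authors likewise obtain this corollary by combining Theorem~\ref{composition op co-isometry} (co-isometry forces $D=0$ and $C$ unitary) with the change-of-variables computation in the preceding Note showing that $C_{\Gamma}$ with $\Gamma(z)=Cz$, $C$ unitary, is an isometry, so that $C_{\Gamma}^*C_{\Gamma}=I=C_{\Gamma}C_{\Gamma}^*$. Your added care about distinguishing $S^*S=I$ from $SS^*=I$ is a sensible precaution but introduces no divergence from the paper's argument.
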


\begin{lemma} \label{weighted comp Az theorem}
Let $\Gamma(z)=C z$ for matrix $ C \in \mathcal{M}(n)$ with $||C||\leq 1$ and $U$ be holomorphic function on $\mathbb{C}^n$ so that $C_{U,\Gamma}$ is bounded on $F_{\psi}^2.$ Then, $C_{U,\Gamma}$ is co-isometry if and only if
$$U(z)=\frac{1}{\overline{U(0)}}, \,\, |U(z)|=1 \,\, \text{and} \,\, C \,\, \text{is unitary}.$$
\end{lemma}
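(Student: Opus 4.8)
The plan is to translate the co-isometry relation $C_{U,\Gamma}C_{U,\Gamma}^*=I_{F_\psi^2}$ into a single functional identity on the reproducing kernels and then read off the three asserted conditions by successive specializations. Using the adjoint formula $C_{U,\Gamma}^*K_{z,\psi}=\overline{U(z)}K_{\Gamma(z),\psi}$ from the introduction together with $\Gamma(z)=Cz$, I would first compute
\[
(C_{U,\Gamma}C_{U,\Gamma}^*K_{z,\psi})(w)=\overline{U(z)}\,U(w)\,K_{Cz,\psi}(Cw),
\]
so that $C_{U,\Gamma}$ being a co-isometry is equivalent to the requirement that $\overline{U(z)}\,U(w)\,K_{Cz,\psi}(Cw)=K_{z,\psi}(w)$ holds for all $z,w\in\mathbb{C}^n$.

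From this identity the conditions on $U$ follow by specialization. Setting $w=0$ and using $K_{Cz,\psi}(0)=K_{z,\psi}(0)=\tfrac{1}{c_{n-1}}$ gives $\overline{U(z)}\,U(0)=1$ for every $z$; in particular $U(0)\neq0$, so $U$ is the constant function $U(z)=1/\overline{U(0)}$, and putting $z=0$ yields $|U(0)|^2=1$, whence $|U(z)|=1$.

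With $U$ now known to be a unimodular constant, $\overline{U(z)}\,U(w)=|U(0)|^2=1$, and the identity collapses to $K_{Cz,\psi}(Cw)=K_{z,\psi}(w)$ for all $z,w$. Expanding both kernels by the series formula and writing $\langle Cw,Cz\rangle=\langle C^*Cw,z\rangle$, I would equate the two $G^{(n-1)}$-series exactly as in the proof of Theorem \ref{composition op co-isometry}. Specializing to $z=w$ makes both scalar arguments the nonnegative reals $|Cw|^2$ and $|w|^2$, and the strict monotonicity on $[0,\infty)$ of $t\mapsto\sum_{r\ge n-1}\tfrac{r(r-1)\cdots(r-n+2)}{c_r}\,t^{r-n+1}$ (all $c_r>0$) forces $|Cw|^2=|w|^2$ for every $w$; polarization then gives $C^*C=I$, i.e. $C$ is unitary.

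For the converse I would assume $U\equiv1/\overline{U(0)}$ with $|U|=1$ and $C$ unitary, and verify the kernel identity directly: $\overline{U(z)}\,U(w)=|U|^2=1$ while $K_{Cz,\psi}(Cw)=K_{z,\psi}(w)$ is immediate from $C^*C=I$ after the same substitution $\langle Cw,Cz\rangle=\langle C^*Cw,z\rangle$. Since $\mathrm{Span}\{K_{z,\psi}:z\in\mathbb{C}^n\}$ is dense in $F_\psi^2$ and $C_{U,\Gamma}$ is bounded, validity of the identity on all kernels upgrades to $C_{U,\Gamma}C_{U,\Gamma}^*=I_{F_\psi^2}$. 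The only genuinely delicate point is the passage from equality of the two reproducing-kernel series to $C^*C=I$; I expect it to be routine here, since it repeats the argument already used for composition operators, with the single new ingredient being the elimination of $U$ handled cleanly by the $w=0$ and $z=0$ substitutions.
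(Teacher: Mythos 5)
Your proposal is correct and follows essentially the same route as the paper's proof: the same kernel identity $U(w)\overline{U(z)}\,K_{\Gamma(z),\psi}(\Gamma(w))=K_{z,\psi}(w)$, the same zero substitutions to obtain $U\equiv 1/\overline{U(0)}$ and $|U|=1$, the same specialization $z=w$ to force $C$ unitary, and the same converse via density of $\mathrm{Span}\{K_{z,\psi}:z\in\mathbb{C}^n\}$. If anything you are more careful than the paper at the one delicate step, since you justify passing from equality of the two positive-coefficient series at $|Cw|^2$ and $|w|^2$ to $|Cw|=|w|$ by strict monotonicity on $[0,\infty)$ and then deduce $C^*C=I$ by polarization, whereas the paper simply asserts $\langle Cz,Cz\rangle=\langle z,z\rangle$ and unitarity without comment.
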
 
\textbf{Proof.} Let  $C_{\Gamma}$ be co-isometry. Then, for all $z, w \in \mathbb{C}^n$
$$(C_{U,\Gamma} C_{U,\Gamma}^*K_{z,\psi})(w)=K_{z,\psi}(w),$$
or
\begin{align} \label{weighted comp AZ eq 1}
U(w) \overline{U(z)} K_{\Gamma(z),\psi}(\Gamma(w))=K_{z,\psi}(w). 
\end{align}
Putting $z=0$ we get
$$U(w) \overline{U(0)} K_{\Gamma(0),\psi}(\Gamma(w))=\frac{1}{c_{n-1}}. $$
It follows that $U(0) \neq 0$ otherwise if $U(0)= 0$ then equation will become zero on left side but right side is non-zero because $c_{n-1}> 0.$ Since $\Gamma(0)=0$ and $K_{\Gamma(0),\psi} =\frac{1}{c_{n-1}},$ therefore,  we get 
$$ U(w)=\frac{1}{\overline{U(0)}} \,\, \text{for all} \,\, w \in \mathbb{C}^n.$$
Putting $w=0$ we get $|U(0)|=1$ so 
\begin{align} \label{weighted comp AZ eq 2}
|U(w)|=1 \,\, \text{for all} \,\, w \in \mathbb{C}^n.
\end{align}
Putting $z=w$ in equation \eqref{weighted comp AZ eq 1} and using equation \eqref{weighted comp AZ eq 2}we get  
$$ K_{\Gamma(z),\psi}(\Gamma(z))=K_{z,\psi}(z),$$ 
or
\begin{align*}
\sum_{r=n-1}^{\infty} \frac{r(r-1) \cdots(r-n+2)}{c_{r}} \langle Cz, Cz \rangle^{r-n+1} &=\sum_{r=n-1}^{\infty} \frac{r(r-1) \cdots(r-n+2)}{c_{r}} \langle z, z \rangle^{r-n+1} 
\end{align*}
Thus, $\langle Cz, Cz \rangle=\langle z, z \rangle$ for all $ z\in \mathbb{C}^n$ and since $C\in \mathcal{M}(n), C$ is unitary. \\
Conversely, let $ U(w)=\dfrac{1}{\overline{U(0)}}, |U(w)|=1$ and $C$ be unitary. Then, for all $z, w \in   \mathbb{C}^n$ we get
\begin{align*}
(C_{U,\Gamma} C_{U,\Gamma}^*K_{z,\psi})(w) &=  U(w) \overline{U(z)} K_{\Gamma(z),\psi}(\Gamma(w))\\
                                           &= K_{\Gamma(z),\psi}(\Gamma(w)) \\
                                           &=\frac{1}{(n-1)!}\sum_{r=n-1}^{\infty} \frac{r(r-1) \cdots(r-n+2)}{c_{r}} \langle Cw, Cz \rangle^{r-n+1} \\
                                           &=\frac{1}{(n-1)!}\sum_{r=n-1}^{\infty} \frac{r(r-1) \cdots(r-n+2)}{c_{r}} \langle w, z \rangle^{r-n+1} \\
                                           &= K_{z,\psi}(w)
\end{align*}
Thus, $C_{U,\Gamma}$ is co-isometry. \\

It is well known that if $V$ is a unitary operator on a Hilbert space $H$ and $\beta$ is a scalar  with $|\beta|=1,$ then $\beta V$ is a unitary operator on $H.$ Combining Theorems \ref{composition op co-isometry}, \ref{weighted comp Az theorem} and Corollary \ref{composition op unitary} we get the following result:
\begin{corollary}
Let $\Gamma(z)=C z$ for matrix $ C \in \mathcal{M}(n), B \in \mathbb{C}^n$ with $||C||\leq 1$ and $U$ be entire function on $\mathbb{C}^n$ so that $C_{U,\Gamma}$ is bounded on $F_{\psi}^2.$ Then, $C_{U,\Gamma}$ is co-isometry if and only if it is unitary.
\end{corollary}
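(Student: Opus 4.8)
The plan is to derive the equivalence by invoking Lemma \ref{weighted comp Az theorem} together with the unitarity characterization in Corollary \ref{composition op unitary} and the elementary fact about unimodular scalar multiples of unitary operators. One direction requires essentially no work: any unitary operator $S$ satisfies $SS^{*}=I$ and is therefore a co-isometry, so I would simply note that if $C_{U,\Gamma}$ is unitary then it is co-isometry.

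For the nontrivial direction I would start by assuming $C_{U,\Gamma}$ is a co-isometry. Since $\Gamma(z)=Cz$ with $\|C\|\le 1$, Lemma \ref{weighted comp Az theorem} applies directly and supplies three facts: $U(z)=\frac{1}{\overline{U(0)}}$ for all $z$, $|U(z)|=1$, and $C$ is unitary. The key structural consequence I would extract is that $U$ is a constant function: setting $\beta=\frac{1}{\overline{U(0)}}$, we have $|\beta|=1$ and $C_{U,\Gamma}=\beta\,C_{\Gamma}$, where $C_{\Gamma}$ is the unweighted composition operator induced by $\Gamma(z)=Cz$.

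Next I would use the unitarity of $C$. By the discussion following Theorem \ref{composition op co-isometry}, the condition $|Cz|=|z|$ forces $C_{\Gamma}$ to be an isometry, and Corollary \ref{composition op unitary} then gives that $C_{\Gamma}$ is unitary. Finally, since $|\beta|=1$ and $C_{\Gamma}$ is unitary, the scalar multiple $\beta\,C_{\Gamma}=C_{U,\Gamma}$ is again unitary, which closes the converse and establishes the equivalence.

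I do not anticipate a genuine obstacle, since the entire weight of the argument is carried by Lemma \ref{weighted comp Az theorem}: its conclusion pins down $U$ as a unimodular constant and $C$ as unitary, after which the only observation needed is that $C_{U,\Gamma}$ is literally a unimodular scalar times the unitary composition operator $C_{\Gamma}$. The single point I would state with care is that the lemma gives the precise value $U\equiv\frac{1}{\overline{U(0)}}$ rather than just $|U|\equiv 1$, and it is this that allows $\beta$ to be factored out of $C_{U,\Gamma}$ cleanly.
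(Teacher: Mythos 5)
Your proposal is correct and follows essentially the same route as the paper: the authors also deduce the corollary by combining Lemma \ref{weighted comp Az theorem} (which pins down $U\equiv 1/\overline{U(0)}$ as a unimodular constant and $C$ as unitary), the unitarity of $C_{\Gamma}$ from Theorem \ref{composition op co-isometry} and Corollary \ref{composition op unitary}, and the standard fact that a unimodular scalar multiple $\beta C_{\Gamma}$ of a unitary operator is unitary. Your write-up merely makes explicit the factorization $C_{U,\Gamma}=\beta\,C_{\Gamma}$ that the paper leaves implicit.
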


\begin{remark}
If $\Gamma$ is a constant function on $\mathbb{C}^n$ say $\Gamma \equiv D$ then for any holomorphic function $U$ on $\mathbb{C}^n$ bounded composition operator $C_{U,\Gamma}$ cannot be co-isometry on $F_{\psi}^2.$ It is easy to see that the necessary condition for $C_{U,\Gamma}$ to be co-isometry is that for all $z \in \mathbb{C}^n$ 
$$ U(z)=\frac{1}{c_{n-1} K_{D,\psi} (D)\overline{U(0)}}, $$
that is $U$ is a constant function on $\mathbb{C}^n.$ For constant functions $U$ and $\Gamma$ since $C_{U,\Gamma}$ cannot be onto on $F_{\psi}^2$, therefore, $C_{U,\Gamma}$ cannot be co-isometry. 
\end{remark} 

\begin{theorem}
Let $\Gamma(z)=C z-D$ for some invertible matrix  $ C \in \mathcal{M}(n), D \in \mathbb{C}^n$ with $||C||\leq 1$ and $U$ be holomorphic function on $\mathbb{C}^n$ such that $C_{U,\Gamma}$ is bounded on $F_{\psi}^2.$ If $C_{U,\Gamma}$ is co-isometry, then for all $z\in \mathbb{C}^n$
$$U(z) =\beta \frac{K_{C^{-1}D,\psi}(z)}{||K_{C^{-1}D,\psi}||}$$
where $\beta=\dfrac{1}{\overline{U(0)} ||K_{C^{-1}D,\psi}|| },$ $|\beta|=\sqrt{c_{n-1}}$ and $|C^{-1}D|=|D|.$
\end{theorem}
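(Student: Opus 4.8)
The plan is to translate the co-isometry relation $C_{U,\Gamma}C_{U,\Gamma}^*=I_{F_{\psi}^2}$ into a reproducing-kernel identity and then read off $U$ by suitable substitutions, in the spirit of the proof of Lemma \ref{weighted comp Az theorem} but now accounting for the nonzero translation $D$. Using the adjoint formula $C_{U,\Gamma}^*K_{z,\psi}=\overline{U(z)}\,K_{\Gamma(z),\psi}$ and then applying $C_{U,\Gamma}$, the relation $(C_{U,\Gamma}C_{U,\Gamma}^*K_{z,\psi})(w)=K_{z,\psi}(w)$ becomes
\[
U(w)\,\overline{U(z)}\,K_{\Gamma(z),\psi}(\Gamma(w))=K_{z,\psi}(w)\qquad\text{for all } z,w\in\mathbb{C}^n;
\]
call this identity $(\ast)$. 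Since $C$ is invertible and $\Gamma(z)=Cz-D$, the two natural evaluation points are $z=0$ (where $\Gamma(0)=-D$) and $w=C^{-1}D$ (where $\Gamma(C^{-1}D)=0$), and these do essentially all the work.

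First I would put $z=0$ in $(\ast)$: since $K_{0,\psi}(w)=1/c_{n-1}\neq0$, the left-hand side cannot vanish, which forces $U(0)\neq0$. Next I would put $w=C^{-1}D$, so that $\Gamma(w)=0$ and $K_{\Gamma(z),\psi}(0)=1/c_{n-1}$; using the conjugate symmetry $K_{z,\psi}(C^{-1}D)=\overline{K_{C^{-1}D,\psi}(z)}$ from \eqref{property of conjugate reproducing kernel}, $(\ast)$ collapses to
\[
\frac{U(C^{-1}D)}{c_{n-1}}\,\overline{U(z)}=\overline{K_{C^{-1}D,\psi}(z)}.
\]
Because $K_{C^{-1}D,\psi}$ is not identically zero, this already shows $U(C^{-1}D)\neq0$; conjugating and then evaluating at $z=0$ (using $K_{C^{-1}D,\psi}(0)=1/c_{n-1}$) lets me pin down the proportionality constant and obtain $U(z)=c_{n-1}U(0)\,K_{C^{-1}D,\psi}(z)$ for all $z$, together with the side relation $U(C^{-1}D)=1/\overline{U(0)}$.

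It then remains to rewrite this in the normalized form and to pin down the two scalar identities. Multiplying and dividing by $\|K_{C^{-1}D,\psi}\|$ puts $U$ in the claimed shape with coefficient $c_{n-1}U(0)\|K_{C^{-1}D,\psi}\|$; to identify this with $\beta=1/(\overline{U(0)}\|K_{C^{-1}D,\psi}\|)$ I would evaluate the formula at $z=C^{-1}D$, giving $U(C^{-1}D)=c_{n-1}U(0)\|K_{C^{-1}D,\psi}\|^{2}$, and compare with $U(C^{-1}D)=1/\overline{U(0)}$. This forces $c_{n-1}|U(0)|^{2}\|K_{C^{-1}D,\psi}\|^{2}=1$, which is exactly the relation that reconciles the two expressions for the coefficient and simultaneously yields $|\beta|=\sqrt{c_{n-1}}$.

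Finally, for $|C^{-1}D|=|D|$ I would return to $(\ast)$ with $z=w=0$: since $\Gamma(0)=-D$, it reads $|U(0)|^{2}K_{-D,\psi}(-D)=1/c_{n-1}$, i.e. $c_{n-1}|U(0)|^{2}\|K_{-D,\psi}\|^{2}=1$. Comparing with $c_{n-1}|U(0)|^{2}\|K_{C^{-1}D,\psi}\|^{2}=1$ from the previous paragraph gives $\|K_{-D,\psi}\|=\|K_{C^{-1}D,\psi}\|$. The step I expect to be the crux is the observation that $\|K_{p,\psi}\|^{2}=K_{p,\psi}(p)=\frac{1}{(n-1)!}\sum_{r\geq n-1}\frac{r(r-1)\cdots(r-n+2)}{c_{r}}\,|p|^{2(r-n+1)}$ is a power series in $|p|^{2}$ with strictly positive coefficients, hence a strictly increasing function of $|p|$; equality of the two norms then forces $|{-D}|=|C^{-1}D|$, that is $|C^{-1}D|=|D|$. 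Everything else is routine reproducing-kernel bookkeeping, so this monotonicity of $\|K_{p,\psi}\|$ in $|p|$ is really the only non-mechanical ingredient.
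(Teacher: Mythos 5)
Your proposal is correct and follows essentially the same route as the paper's own proof: you start from the same kernel identity $U(w)\overline{U(z)}\,K_{\Gamma(z),\psi}(\Gamma(w))=K_{z,\psi}(w)$ and exploit the same special points $z=0$, $w=C^{-1}D$, $z=C^{-1}D$ and $z=w=0$ to extract $U(z)=c_{n-1}U(0)K_{C^{-1}D,\psi}(z)$, the scalar relations for $\beta$, and the norm equality $\|K_{-D,\psi}\|=\|K_{C^{-1}D,\psi}\|$. The only (harmless) deviations are cosmetic: you bypass the paper's case split at $D=0$ and its intermediate formula $\overline{U(z)}=1/\bigl(c_{n-1}U(0)K_{\Gamma(z),\psi}(-D)\bigr)$, and you make explicit the strict monotonicity of $p\mapsto K_{p,\psi}(p)$ as a power series in $|p|^2$ with positive coefficients, a step the paper leaves implicit when concluding $|C^{-1}D|=|D|$ from $K_{-D,\psi}(-D)=K_{C^{-1}D,\psi}(C^{-1}D)$.
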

\textbf{Proof.} Let  $C_{\Gamma}$ be co-isometry. Then, for all $z, w \in \mathbb{C}^n$
\begin{align} \label{weighted comp AZ-D eq 1}
U(w) \overline{U(z)} K_{\Gamma(z),\psi}(\Gamma(w))=K_{z,\psi}(w). 
\end{align} 
Putting $w=0,$ for all $z \in \mathbb{C}^n$ we get 
$$U(0) \overline{U(z)} K_{\Gamma(z),\psi}(\Gamma(0))=\frac{1}{c_{n-1}}. $$
It follows that $U(0)\neq 0,$ in fact $U(z)\neq 0$ for any $z \in \mathbb{C}^n$ because $c_{n-1}>0.$ Since $\Gamma(0)=-D$ it follows that for all $z \in \mathbb{C}^n$
\begin{align} \label{weighted comp AZ-D eq 2}
  \overline{U(z)} =\frac{1}{c_{n-1} U(0) K_{\Gamma(z),\psi}(-D)}.
  \end{align}  
Puttting $z=0$ we get
\begin{align} \label{weighted comp AZ-D eq 3} 
\overline{U(0)} =\frac{1}{c_{n-1} U(0) K_{-D,\psi}(-D)} \,\, \text{or}\, \, |U(0)|^2 =\frac{1}{c_{n-1} K_{-D,\psi}(-D)}. 
\end{align} 
Putting $w=C^{-1}D$ in equation \eqref{weighted comp AZ-D eq 1} and using $\Gamma(C^{-1}D)=0$ for all $z \in \mathbb{C}^n,$  we get 
$$U(C^{-1}D) \,  \overline{U(z)}\, \frac{1}{c_{n-1}}=K_{z,\psi}(C^{-1}D)$$
or  
\begin{align} \label{weighted comp AZ-D eq 4}
 \overline{U(z)}=c_{n-1}\frac{ K_{z,\psi}(C^{-1}D)}{U(C^{-1}D)}
\end{align}
Putting $z=0$ we get
\begin{align} \label{weighted comp AZ-D eq 5}
\overline{U(0)}=\frac{ 1}{U(C^{-1}D)} 
\end{align} 
Combining equations \eqref{weighted comp AZ-D eq 3} and \eqref{weighted comp AZ-D eq 5} we get
\begin{align} \label{weighted comp AZ-D eq 6}
K_{-D,\psi}(-D) =\frac{U(C^{-1}D)}{c_{n-1} U(0)}
 \end{align}
Equating $\overline{U(z)}$ from equations \eqref{weighted comp AZ-D eq 2}, \eqref{weighted comp AZ-D eq 4}
and using equation \eqref{weighted comp AZ-D eq 6}
\begin{align*} \label{weighted comp AZ-D eq 7} 
\frac{1}{c_{n-1}} K_{-D,\psi}(-D)=K_{\Gamma(z),\psi}(-D) K_{z,\psi}(C^{-1}D) \,\,\, \text{for all} \, z \in \mathbb{C}^n
 \end{align*}
 Putting $z=C^{-1}D$ we get 
\begin{equation} \label{weighted comp AZ-D eq 8*}
  K_{-D,\psi}(-D)=K_{C^{-1}D,\psi}(C^{-1}D) 
 \end{equation}
 or
$$ \sum_{r=n-1}^{\infty} \frac{r(r-1) \cdots(r-n+2)}{c_{r}} \langle D, D \rangle^{r-n+1}=\sum_{r=n-1}^{\infty} \frac{r(r-1) \cdots(r-n+2)}{c_{r}} \langle C^{-1}D, C^{-1}D \rangle^{r-n+1}.$$
Thus,
\begin{equation} \label{weighted comp AZ-D eq 8}                
 \langle D, D \rangle=\langle C^{-1}D, C^{-1}D \rangle.
\end{equation}
Case I. If $D=0,$ then $C^{-1}D=0.$ Since $K_{0}(z)=||K_{0}||^2=\frac{1}{c_{n-1}},$ the required result follows from Lemma \ref{weighted comp Az theorem} \\
Case II. If $D \neq 0$ then combining equations \eqref{weighted comp AZ-D eq 3}, \eqref{weighted comp AZ-D eq 4}, \eqref{weighted comp AZ-D eq 5} and  for all $z \in \mathbb{C}^n$ we get 
\begin{align} 
\overline{U(z)} &=c_{n-1}  \overline{U(0)}\, K_{z,\psi}(C^{-1}D) \notag
\\
                &=\frac{K_{z,\psi}(C^{-1}D)}{U(0)K_{-D,\psi}(-D)}. \label{weighted comp AZ-D eq 9}                
\end{align}
From equation \eqref{weighted comp AZ-D eq 8*} it follows that  
\begin{align*}
K_{-D,\psi}(-D) &=||K_{C^{-1}D,\psi}||^2
\end{align*}
Substituting this in equation \eqref{weighted comp AZ-D eq 3} and  \eqref{weighted comp AZ-D eq 9}  we get
\begin{align*}
 |U(0)| =\frac{1}{\sqrt{c_{n-1}} ||K_{C^{-1}D,\psi}||}
\end{align*} 
and
\begin{align*} 
\overline{U(z)}=\frac{K_{z,\psi}(C^{-1}D)}{U(0)||K_{C^{-1}D,\psi}||^2},
\end{align*}  
respectively. Taking conjugate on both the sides we get 
\begin{align*} 
{U(z)}=\beta \frac{K_{C^{-1}D,\psi}(z)}{||K_{C^{-1}D,\psi}||} \,\,\, \text{for all}\,\, z \in \mathbb{C}^n 
\end{align*} 
where $\beta=\dfrac{1}{\overline{U(0)} ||K_{C^{-1}D,\psi}||}, |\beta|=\sqrt{c_{n-1}}$ and from equation \eqref{weighted comp AZ-D eq 8} $|C^{-1}D|=|D|.$ 
             

\textbf{Anuradha Gupta}\\
 Department of Mathematics, Delhi College of Arts and Commerce,\\
  University of Delhi, New Delhi-110023, India.\\
  \vspace{0.2cm}
 email: dishna2@yahoo.in\\
  \textbf{Geeta Yadav} \\
  Department of Mathematics, Janki Devi Memorial College,\\
  University of Delhi, New Delhi-110060, India.\\
  \vspace{0.2cm} 
  email: ageetayadav@gmail.com \\
  Corresponding Author: Geeta Yadav
\end{document}